\pdfoutput=1
\documentclass[11pt,a4paper]{amsart}
\usepackage[margin=1in]{geometry}
\usepackage[utf8]{inputenc}
\usepackage[T1]{fontenc}
\usepackage{textcase}
\usepackage[dvipsnames]{xcolor}
\usepackage{microtype}

\usepackage{amsmath}
\usepackage{amssymb}
\usepackage{eucal}
\usepackage{mathrsfs}
\usepackage{tikz-cd}

\usepackage{enumitem}
\usepackage[pdfusetitle,colorlinks]{hyperref}
\hypersetup{bookmarksdepth=2,pdfencoding=unicode,allcolors=MidnightBlue}

\usepackage{zref-clever}
\zcsetup{abbrev=false,cap=true,nameinlink=false,sort=false,lang=english}
\newcommand{\cref}[1]{\zcref{#1}}
\newcommand{\Cref}[1]{\zcref[S]{#1}}
\zcsetup{pairsep={ and~},lastsep={, and~}}
\zcRefTypeSetup{equation}{Name-sg=,Name-pl=,refbounds={(,,,)}}
\AddToHook{env/equation/begin}{\zcsetup{countertype={equation=equation}}}
\AddToHook{env/align/begin}{\zcsetup{countertype={equation=equation}}}
\zcRefTypeSetup{item}{Name-sg=,Name-pl=,refbounds={(,,,)}}
\newlist{conenum}{enumerate}{1}
\setlist[conenum,1]{label=(\roman*),ref=\roman*}
\zcRefTypeSetup{conenumi}{Name-sg=,Name-pl=,refbounds={(,,,)}}

\NewDocumentCommand{\newzctheorem}{momo}{\IfValueTF{#4}
  {\newtheorem{#1}{#3}[#4]}
  {\IfValueTF{#2}
    {\AddToHook{env/#1/begin}{\zcsetup{countertype={#2=#1}}}\newtheorem{#1}[#2]{#3}}
    {\newtheorem{#1}{#3}}}}
\numberwithin{equation}{section}
\theoremstyle{plain}
\newzctheorem{Theorem}{Theorem}

\zcRefTypeSetup{Theorem}{Name-sg=Theorem,Name-pl=Theorems}
\newzctheorem{theorem}[equation]{Theorem}
\newzctheorem{proposition}[equation]{Proposition}
\newzctheorem{lemma}[equation]{Lemma}
\newzctheorem{corollary}[equation]{Corollary}

\theoremstyle{definition}
\newzctheorem{definition}[equation]{Definition}
\newzctheorem{example}[equation]{Example}

\theoremstyle{remark}
\newzctheorem{remark}[equation]{Remark}

\let\oldAA\AA\let\AA\relax
\let\oldSS\SS\let\SS\relax

\newcommand{\ZZ}{\mathbf{Z}}
\newcommand{\FF}{\mathbf{F}}
\newcommand{\QQ}{\mathbf{Q}}
\newcommand{\CC}{\mathbf{C}}
\newcommand{\SS}{\mathbf{S}}

\newcommand{\AA}{\mathbb{A}}

\newcommand{\et}{\textnormal{ét}}
\newcommand{\lis}{\textnormal{lis}}
\newcommand{\mot}{\textnormal{mot}}
\newcommand{\Bet}{\textnormal{Bet}}
\newcommand{\cdh}{\textnormal{cdh}}

\newcommand{\Spec}{\operatorname{Spec}}
\newcommand{\op}{\operatorname{op}}

\newcommand{\KH}{\operatorname{KH}}
\newcommand{\ku}{\operatorname{ku}}
\newcommand{\gr}{\operatorname{gr}}
\newcommand{\fil}{\operatorname{fil}}
\newcommand{\CH}{\operatorname{CH}}

\newcommand{\D}{\operatorname{D}}

\newcommand{\X}{\mathord{-}}

\newcommand{\Cls}[1]{\mathscr{#1}}
\newcommand{\Cat}[1]{\mathsf{#1}}

\title{On motivic cohomology of commutative C*-algebras}
\author{Ko Aoki}
\address{Department of Mathematical Sciences,
  University of Copenhagen, Denmark
}
\email{aoki@math.ku.dk}
\date{\today}

\begin{document}

\begin{abstract}
  Consider the ring
  of complex-valued continuous functions
  on a compact Hausdorff space~\(X\).
  Cortiñas–Thom computed its nonpositive algebraic \(K\)-theory,
  confirming Rosenberg’s expectation.
  We give a motivic refinement of their result:
  For \(i\geq2j\geq0\),
  the \(i\)th integral motivic cohomology group
  of weight~\(j\)
  is naturally isomorphic
  to the \(i\)th Betti cohomology group of~\(X\).
\end{abstract}

\maketitle

\section{Introduction}\label{s:intro}

A \emph{compactum} means a compact Hausdorff space.
For a compactum~\(X\),
we write
\(
  \Cls{C}(X)
\)
for the ring
of complex-valued continuous functions on~\(X\).
These rings are precisely
the unital commutative complex C*-algebras.
For a local compactum, Betti cohomology means sheaf (aka Čech) cohomology.

\subsection{Results}\label{ss:results}

The following appeared in~\cite[Corollary~6.9]{CortinasThom12},
which confirmed Rosenberg’s expectation:

\begin{theorem}[Cortiñas–Thom]\label{xhar7u}
  For every compactum~\(X\),
  there is a natural isomorphism
  \begin{equation*}
    K_{*}(\Cls{C}(X))\simeq\ku^{-{*}}(X)
  \end{equation*}
  for \({*}\leq0\),
  where \(K\)
  and~\(\ku\)
  denote (nonconnective) algebraic \(K\)-theory
  and connective complex topological \(K\)-theory,
  respectively.
\end{theorem}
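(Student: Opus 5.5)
The plan is to compare algebraic \(K\)-theory of \(\Cls{C}(X)\) with homotopy \(K\)-theory and then with topological \(K\)-theory, and to use the special structure of negative \(K\)-groups.

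\textbf{Reduction to \(\KH\).} First I will show that the comparison map \(K(\Cls{C}(X))\to\KH(\Cls{C}(X))\) is an isomorphism on \(\pi_{*}\) for \({*}\leq0\). The fiber of this map is controlled by \(K\)-theory of the nil-type rings \(\Cls{C}(X)[t_1,\dots,t_n]\) modulo \(\Cls{C}(X)\); for \({*}\leq0\) only \(K_0\) and \(K_{-1}\)-type terms contribute. These vanish because \(\Cls{C}(X)\) is \(K_0\)-regular and \(K_{-n}\)-regular. I would try to prove this using that \(\Cls{C}(X)\) is a filtered colimit of finitely generated subalgebras. Alternatively, I would argue directly that the negative \(K\)-groups satisfy excision for C*-algebras (Suslin–Wodzicki), since C*-algebras have the triangle property (H-unital ideals). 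Then \(K_{\leq0}\) becomes a homotopy-invariant, excisive functor on the relevant category.

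\textbf{Identification with \(\ku\).} Next I will show that \(\KH(\Cls{C}(X))\) agrees with \(\ku^{-*}(X)\) in nonpositive degrees. The key steps, in order, are:
\begin{enumerate}[label=(\arabic*)]
\item Write \(\Cls{C}(X)\) as a filtered colimit of finitely generated reduced \(\CC\)-algebras \(A_\alpha\), and set \(V_\alpha=\Spec A_\alpha\). Then \(\KH(\Cls{C}(X))=\operatorname{colim}\KH(A_\alpha)\).
\item For each complex variety \(V\), use the cdh-descent of \(\KH\) together with the cdh-local comparison \(\KH_{\leq0}\simeq\) (cdh cohomology of \(\ZZ\)-type sheaves). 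Compare this with the Betti side on \(V(\CC)\) via the natural map to \(\ku\) of \(V(\CC)\). In nonpositive degrees, both theories are computed by the bottom piece of the Atiyah–Hirzebruch/motivic filtration, namely \(H^{i}_{\cdh}(V,\ZZ)\) versus \(H^{i}(V(\CC),\ZZ)\), in the appropriate weight-zero range.
\item Pass to the colimit, using that \(X=\lim V_\alpha(\CC)\)-like inverse systems compute Čech cohomology of \(X\) (continuity of Čech cohomology). This yields \(H^i(X,\ZZ)\), and hence \(\ku^{-*}(X)\) for \({*}\leq0\), since \(\ku^{n}\) for \(n\geq0\) is built from \(H^{n-2k}\).
\end{enumerate}

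\textbf{Main obstacle.} The hardest step is (3), the colimit comparison. The issue is that \(X\) is not literally the limit of the \(V_\alpha(\CC)\); rather, the maximal ideal space relates to these only through closed subsets. I would first try to reduce to the case where \(X\) is a finite CW complex, or a compact subset of \(\CC^n\), by writing \(X\) as an inverse limit of compact polyhedra. Both \(K_{\leq0}(\Cls{C}(\X))\) (via continuity for C*-algebras) and \(\ku\) commute with this limit. For compact subsets of \(\CC^n\), I would then use germs of algebraic neighborhoods, which is where I expect genuine analytic input to be needed.
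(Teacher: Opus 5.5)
The paper gives no proof of this statement. It is quoted from Cortiñas--Thom, with a pointer to an algebraic proof whose cd-topology machinery (\cref{covers,going-down}) the paper reviews and reuses in \cref{s:proof}. Measured against either, your proposal has genuine gaps.

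First, the reduction to $\KH$ assumes the $K_0$-regularity of $\Cls{C}(X)$ rather than proving it. This is part of Cortiñas--Thom's $K$-regularity theorem (quoted in \cref{ss:motivic}), and neither of your suggested justifications supplies it. Finitely generated subalgebras of $\Cls{C}(X)$ are typically not $K_0$-regular. For example, $\CC[t^2,t^3]\subset\Cls{C}([0,1])$ is the cusp, which is not seminormal, so $N\!\operatorname{Pic}\neq0$ and hence $NK_0\neq0$ (the determinant maps the latter onto the former). A filtered-colimit argument would therefore have to show that every $NK_0$-class of a subalgebra dies in a larger subalgebra, and that is the whole difficulty. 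Suslin--Wodzicki excision, on the other hand, yields descent along Milnor squares, not homotopy invariance of any kind. Topological homotopy invariance of $K_{\leq0}(\Cls{C}(\X))$ is essentially what you are trying to prove, so it cannot serve as an input.

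Second, step~(2) is false as formulated, so step~(3) has nothing correct to pass to the limit. Nonpositive $\KH$ and $\ku$ are not governed by the weight-zero piece. The Atiyah--Hirzebruch spectral sequence builds $\ku^{n}(X)$ from $H^{n+2k}(X;\ZZ)$ for all $k\geq0$ (not $H^{n-2k}$). Correspondingly, the weight-$j$ graded piece of the motivic filtration contributes the motivic group $H^{n+2j}(\X;\ZZ(j))$ to $\KH_{-n}$ for every $j\geq0$. For instance, $K_{-1}(\Cls{C}(S^3))\cong\ku^1(S^3)\cong\ZZ$ comes from $H^3(S^3;\ZZ)$ in weight~$1$, exactly the range that \cref{main} addresses.

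Nor is there a comparison variety by variety: $K_{-1}(\CC[t^{\pm1}])=0$ while $\ku^1(\CC^{\times})\cong\ZZ$, and $H^1_{\cdh}(E;\ZZ)=0$ for an elliptic curve~$E$ while $H^1(E(\CC);\ZZ)\cong\ZZ^2$. The isomorphism exists only after passing to $\Cls{C}(X)$, and Čech continuity cannot produce it. $X$ is the limit of its compact images $X_\alpha\subset V_\alpha(\CC)$, whose cohomology need not resemble that of $V_\alpha(\CC)$, let alone the cdh cohomology of~$V_\alpha$. Your fallback of writing $X$ as a limit of polyhedra $X_\beta$ fails as well. $\Cls{C}(X)$ is then only the completion of $\operatorname{colim}_\beta\Cls{C}(X_\beta)$, and continuity of algebraic $K_{\leq0}$ along such limits is not available a priori; it follows from the theorem, not conversely.

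The missing idea is \emph{local vanishing plus descent on compacta}:
\begin{itemize}
\item Work with cd sheaves on compacta of countable weight, reducing a general $X$ via $\aleph_1$-cofiltered limits, which $\Cls{C}(\X)$ genuinely turns into colimits.
\item Abstract blowup squares become Milnor squares under $\Cls{C}(\X)$. This is where excision (e.g.\ Suslin--Wodzicki) properly enters, giving cd descent.
\item A negative $K$-class comes from a finitely generated subalgebra~$A$ and vanishes on a cdh cover of $\Spec A$ by smooth schemes (resolution of singularities). The pullback of this cover to~$X$ is refined by a cd cover (\cref{covers}, the genuinely topological input).
\item Positive-degree $\ku$-classes die on finite closed covers, as in \cref{x557y5}.
\item Then \cref{going-down} propagates the Serre--Swan isomorphism in degree~$0$ to all nonpositive degrees.
\end{itemize}
This never passes through $K$-regularity or a variety-level comparison.
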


An algebraic proof,
valid more generally for functions with values
in a local division ring, was given in~\cite{k-ros-1}.
See~\cite[Section~1.1]{k-ros-1}
for a comparison of the methods.

Elmanto–Morrow~\cite{ElmantoMorrow}
constructed a \emph{motivic filtration} on algebraic \(K\)-theory
for equicharacteristic rings.
In this paper, we work in characteristic zero.
For a \(\QQ\)-algebra~\(A\),
the weight-\(j\) graded piece of this filtration is
the shifted \emph{motivic cohomology}
\begin{equation*}
  \gr^j_{\mot}K(A)
  \simeq \ZZ(j)^{\mot}(A)[2j].
\end{equation*}
Our result computes the motivic cohomology of~\(\Cls{C}(X)\)
in the range corresponding to~\(K_{*}\) for~\({*}\leq0\):

\begin{Theorem}\label{main}
  Let~\(X\) be a compactum
  and \(j\geq0\) and~\(i\) integers.
  Betti realization induces a natural map
  \begin{equation*}
    H^i_{\mot}(\Cls{C}(X);\ZZ(j))
    \to
    H^i_{\Bet}(X;\ZZ(j)),
  \end{equation*}
  which is bijective for~\(i\geq2j\)
  and surjective for~\(i=2j-1\).
  Here \(\ZZ(j)\) on the right-hand side
  denotes \((2\pi i)^j\ZZ\);
  its indicated generator identifies the target
  with \(H^{i}_{\Bet}(X;\ZZ)\).
\end{Theorem}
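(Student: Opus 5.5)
We will use the structure of Elmanto–Morrow motivic cohomology in characteristic zero. For a \(\QQ\)-algebra~\(A\) there is a cartesian square expressing \(\ZZ(j)^{\mot}(A)\) as a pullback of \(\ZZ(j)^{\cdh}(A)\) (the cdh-sheafified motivic cohomology, left Kan extended from smooth schemes) and the cyclic-homology term \(\mathbf{L}\Omega^{<j}_{A/\QQ}[-1]\) over the cdh-local Hodge-truncated derived de~Rham term \(R\Gamma_{\cdh}(\Spec A,\Omega^{<j})[-1]\). The terms \(\mathbf{L}\Omega^{<j}[-1]\) and its cdh version have cohomology only in degrees \(\leq j\). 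Since \(j\leq 2j-1\) once \(j\geq1\) (and \(j=0\) is trivial: \(\ZZ(0)^{\mot}=\ZZ\) locally constant and \(H^i_{\Bet}(X;\ZZ)\) is computed as \(H^0\) of locally constant functions in degree~\(0\), with both sides vanishing for \(i>0\) by an argument below), the map \(H^i_{\mot}\to H^i_{\cdh}\) is bijective for \(i\geq 2j\) and surjective for \(i=2j-1\) as soon as \(j+1\leq 2j-1\); the small cases \(j=1\) will be handled directly via \(\ZZ(1)^{\mot}\simeq\mathbf{G}_m[-1]\), where \(H^2_{\mot}=\operatorname{Pic}\) and \(H^1_{\mot}=\Cls{C}(X)^\times\), and the exponential sequence \(0\to\ZZ(1)\to\Cls{C}\xrightarrow{\exp}\Cls{C}^\times\to0\) on \(X\) gives \(\operatorname{Pic}(\Cls{C}(X))\simeq H^2_{\Bet}(X;\ZZ(1))\) (Swan) and surjectivity \(\Cls{C}(X)^\times\to H^1_{\Bet}\).

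So the main task is the cdh part: show \(H^i_{\cdh}(\Cls{C}(X);\ZZ(j))\to H^i_{\Bet}(X;\ZZ(j))\) is an isomorphism for \(i\geq 2j\) (and surjective at \(2j-1\)). The plan is to reduce to finitely generated subalgebras: write \(\Cls{C}(X)\) as a filtered colimit of finitely generated \(\CC\)-subalgebras \(A_\alpha\); cdh motivic cohomology commutes with filtered colimits, and \(X=\lim_\alpha \Spec(A_\alpha)(\CC)\) in a suitable sense (the images of \(X\) in \(\Spec(A_\alpha)(\CC)\) are compact subsets \(K_\alpha\)), so continuity of Čech cohomology gives \(H^i_{\Bet}(X)=\operatorname{colim}H^i_{\Bet}(K_\alpha)=\operatorname{colim}\) over compact neighborhoods. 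For a finite type \(\CC\)-scheme \(Y\) we compare \(H^i_{\cdh}(Y;\ZZ(j))\) with \(H^i_{\Bet}(Y(\CC);\ZZ(j))\): both are cdh sheaves, so by resolution of singularities and cdh descent we reduce to smooth \(Y\), where Beilinson–Lichtenbaum / Bloch–Kato (Voevodsky) gives that \(\ZZ(j)^{\mot}\to\tau^{\leq j}R\epsilon_*\ZZ(j)_{\et}\) is an equivalence. Étale and Betti agree with finite coefficients (Artin comparison), so the cofiber of motivic \(\to\) Betti is built from rational information and étale/Betti cohomology in degrees \(>j\).

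Here is the key step, and the expected main obstacle: motivic cohomology in degrees \(>j\) on a smooth affine is not Betti cohomology, so one cannot compare at finite level. Instead we must exploit the colimit: the relevant classes become zero or are hit after passing to \(\Cls{C}(X)\), i.e. the cofiber becomes contractible in degrees \(\geq 2j-1\) in the colimit. The approach I would try is to use that \(\Cls{C}(X)\) is henselian along every closed point and "locally contractible" in the sense that germs of continuous functions are topologically trivial: compute cdh motivic cohomology as a sheaf on \(X\) (using that \(\Spec\Cls{C}(X)\) has the same closed points and the Zariski/cdh topology on it can be pulled back to \(X\) via the maximal ideals), show stalks at points of \(X\), i.e. rings of germs of continuous functions, have \(\ZZ(j)^{\cdh}\) with the motivic cohomology of a point \(\CC\)-like in degrees \(\geq 2j-1\) (namely \(\ZZ\) in degree \(0\) for \(j=0\) and nothing relevant otherwise, using that the germ ring is a colimit where rational and \(\mathbf{Q}/\ZZ\) parts of \(H^{>j}\) die, via the projective bundle / homotopy invariance along \(\AA^1\)-like paths \(t\mapsto tf\)), and then run the descent spectral sequence, whose \(E_2\) term \(H^p_{\Bet}(X;\mathcal{H}^q)\) yields \(H^i_{\Bet}(X;\ZZ(j))\) via the Hodge-like shift \(q=2j\) from \(\ZZ(j)[2j]\) contributions. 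The degree bookkeeping that only \(q\leq 2j\) and the top piece matters is what gives exactly the range \(i\geq 2j\), with \(i=2j-1\) surjective.
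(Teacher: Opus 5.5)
\textbf{There is a genuine gap: the main step for general \(j\) is not an argument, and several of your preliminary reductions are false as stated.}

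\emph{The preliminary reductions.}
For \(j=0\), \(\ZZ(0)^{\mot}(A)\simeq R\Gamma_{\cdh}(\Spec A;\ZZ)\); it is not just the locally constant functions in degree~\(0\). Also, \(H^i_{\Bet}(X;\ZZ)\) need not vanish for \(i>0\) (e.g.\ \(H^1_{\Bet}(S^1;\ZZ)\cong\ZZ\)). So \(j=0\) is the nontrivial assertion \(H^i_{\cdh}(\Spec\Cls{C}(X);\ZZ)\cong H^i_{\Bet}(X;\ZZ)\), which you do not prove.

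For \(j=1\), your exponential-sequence argument is fine for \(i=1,2\). Bijectivity for \(i\geq3\) (e.g.\ \(X=S^3\)) is not addressed.

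Your reduction from \(\ZZ(j)^{\mot}\) to the cdh-local theory rests on the claim that \(R\Gamma_{\cdh}(\Spec A,\Omega^{<j})\) is concentrated in degrees \(<j\). This is false for general \(A\): cdh cohomology of \(\Omega^k\) on singular schemes is only bounded by the dimension, and \(\Cls{C}(X)\) is a filtered colimit of finite type algebras of unbounded dimension. For \(\Cls{C}(X)\) the comparison does hold, but for a deeper reason: \(K\)-regularity (Corti\~nas--Thom) gives, via Bouis, \(\ZZ(j)^{\mot}(\Cls{C}(X))\simeq\ZZ(j)^{\mathbb{A}}(\Cls{C}(X))\) in all degrees. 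This is how the paper reduces to the \(\mathbb{A}^1\)-invariant theory. (Also, Beilinson--Lichtenbaum with \(\tau^{\leq j}\) is a finite-coefficient statement, not an integral one.)

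\emph{The main step.}
First, you do not justify that \(X\mapsto\ZZ(j)^{\mathbb{A}}(\Cls{C}(X))\) has descent on~\(X\). The paper gets descent for finite closed covers and abstract blowup squares (the cd topology): \(\Cls{C}(\X)\) turns these into Milnor squares, and \(\ZZ(j)^{\mathbb{A}}\) satisfies Milnor excision.

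Second, your spectral-sequence bookkeeping is inconsistent. By your own stalk claim, for \(j\geq2\) the sheaves \(\mathcal{H}^q\) vanish for \(q\geq 2j-1\), since \(H^q(\CC;\ZZ(j))=0\) for \(q>j\). So there is no row \(q=2j\). Even if there were a constant row \(\ZZ\) at \(q=2j\), it would contribute \(H^{i-2j}_{\Bet}(X)\) to total degree \(i\), not \(H^i_{\Bet}(X)\). What actually sits in total degree \(\geq 2j\) are groups \(H^p(X;\mathcal{H}^q)\) for small \(q\), with highly non-constant sheaves (already for \(j=1\), the sheaf of invertible continuous functions). You give no way to compute these.

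\emph{The missing idea.}
Avoid computing local sheaves altogether. Prove a \emph{global} comparison in one edge degree for every compactum. Then propagate it to all higher cohomological degrees by a going-down argument that needs only cd-local \emph{vanishing} above the edge. The paper does this twice:
\begin{enumerate}
  \item Rationally, with edge degree \(2j\). The edge is identified through the Adams weight-\(j\) summand of \(\KH_0(\Cls{C}(X))\otimes\QQ\cong\ku^0(X)\otimes\QQ\) and the Chern character. Local vanishing comes from resolution of singularities and \(\CH^j(B,2j-i)\otimes\QQ=0\) for \(i>2j\), plus the fact that \'eh covers pull back to cd covers.
  \item Mod \(p\), with edge degree \(0\). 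The edge is identified via idempotents, and local vanishing comes from Beilinson--Lichtenbaum. This yields an equivalence in all degrees, so the fiber of \(b_{j,X}\) is rational, and the integral statement follows.
\end{enumerate}

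Finally, surjectivity at \(i=2j-1\) cannot come out of such bookkeeping, since going-down only controls degrees \(\geq 2j\). The paper needs a separate input: surjectivity of the rational cycle map for \(\Cls{C}(X)\) in degree \(2j-1\).
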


\subsection{Strategy}\label{ss:strategy}

For \(j\geq0\),
we construct in \cref{ss:motivic} a natural comparison morphism
\begin{equation*}
  b_{j,X}\colon
  \ZZ(j)^{\mot}(\Cls{C}(X))
  \to \Gamma_{\Bet}(X;\ZZ(j)).
\end{equation*}
We first establish the following rational and finite-coefficient
comparisons:

\begin{Theorem}\label{main-q}
  Let~\(X\) be a compactum and \(j\geq0\) an integer.
  The map induced by~\(b_{j,X}\)
  \begin{equation*}
    H^{i}_{\mot}(\Cls{C}(X);\QQ(j))
    \to H^{i}_{\Bet}(X;\QQ(j))
  \end{equation*}
  is bijective for \(i\geq2j\)
  and surjective for \(i=2j-1\).
\end{Theorem}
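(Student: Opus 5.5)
The plan is to deduce \cref{main-q} from \cref{xhar7u} by splitting rational \(K\)-theory into Adams eigenspaces. For a \(\QQ\)-algebra~\(A\), the Elmanto–Morrow motivic filtration splits rationally, functorially in~\(A\):
\begin{equation*}
  K(A)_{\QQ}\simeq\prod_{j\geq0}\QQ(j)^{\mot}(A)[2j],
\end{equation*}
where the \(j\)th factor is the eigenspace on which the Adams operation \(\psi^k\) acts by~\(k^j\). Taking homotopy in degree \(n=2j-i\), this gives \(H^i_{\mot}(A;\QQ(j))\simeq K_{2j-i}(A)^{(j)}_{\QQ}\). On the topological side, the Chern character gives
\begin{equation*}
  \ku^{-n}(X)_{\QQ}\simeq\prod_{j\geq0}H^{2j-n}_{\Bet}(X;\QQ(j)),
\end{equation*}
and the topological Adams operation \(\psi^k\) acts on the \(j\)th factor by~\(k^j\). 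For a compactum both products have only finitely many nonzero terms in each degree, or the argument can be run one weight at a time, so I will not worry about convergence.

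The first step is to check that the comparison \(K(\Cls{C}(X))\to\ku(X)\) underlying \cref{xhar7u} commutes with the Adams operations. Both sides should be \(\lambda\)-ring maps, since the comparison comes from \(K\)-theory of vector bundles and projective modules, extended to negative degrees. The second step is to check that, after rationalization and passage to weight~\(j\) pieces, this comparison agrees up to sign and normalization with the map induced by~\(b_{j,X}\). Granting both steps, for \(i\geq2j\) we have \(n=2j-i\leq0\). Then \cref{xhar7u}, tensored with~\(\QQ\), is an isomorphism \(K_n(\Cls{C}(X))_{\QQ}\simeq\ku^{-n}(X)_{\QQ}\) that respects the eigenspace decompositions. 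Its weight~\(j\) component is exactly the map in \cref{main-q}, which is therefore bijective.

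For \(i=2j-1\) we have \(n=1\), and surjectivity comes from the corresponding statement for \(K_1\). Since \(\ku^{-1}(X)\simeq\pi_0\) of the unitary group of \(M_\infty(\Cls{C}(X))\), every class in \(\ku^{-1}(X)\) is represented by an invertible matrix over \(\Cls{C}(X)\). Hence \(K_1(\Cls{C}(X))\to\ku^{-1}(X)\) is surjective. Rationally this surjection respects Adams eigenspaces, so each weight-\(j\) component is surjective, which is the assertion for \(i=2j-1\).

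I expect the main obstacle to be the compatibility in the first two steps. The comparison map \(b_{j,X}\) is built from Betti realization in \cref{ss:motivic}, whereas \cref{xhar7u} is stated only as an abstract natural isomorphism. We must therefore know that the Cortiñas–Thom map arises from a map of spectra, in fact of \(\lambda\)-rings up to homotopy, namely \(K\to K^{\mathrm{top}}\to\ku\) in nonpositive degrees, and that Betti realization of the motivic filtration maps to the Postnikov (Chern character) filtration on the topological side. My first attempt would be to identify both maps on \(\CC\)-points of smooth affine \(\CC\)-schemes, where the motivic filtration is the classical one and Betti realization is compatible with Chern characters. I would then extend to \(\Cls{C}(X)\) by writing it as a filtered colimit of finitely generated \(\CC\)-subalgebras and using cdh/\(\AA^1\)-invariance of rational negative \(K\)-theory to reduce to the smooth case.
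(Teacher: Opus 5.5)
Your argument for \(i\geq2j\) has a genuine gap, and it sits exactly at the step you grant. \Cref{xhar7u} is only an abstract natural isomorphism. To conclude anything about \(b_{j,X}\), you need that its rationalization, in each degree \(n=2j-i<0\), is the sum over \(j\) of the maps \(b_{j,X}\) up to nonzero scalars. Otherwise you only learn that the source and target of \(b_{j,X}\) are abstractly isomorphic.

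Your plan for establishing this identification cannot succeed. For smooth \(A\) one has \(\KH_n(A)=0\) when \(n<0\), so comparing maps on smooth affine schemes carries no information in negative degrees. Moreover, \cref{xhar7u} says nothing about how its isomorphism restricts along \(A\to\Cls{C}(X)\) for finitely generated \(A\). Using cdh descent from the smooth case, you can at best show that a Betti map \(\KH(\Cls{C}(X))_{\QQ}\to\Gamma(X;\ku)_{\QQ}\), assembled from finite type subalgebras, respects weights. Whether that map is an isomorphism in negative degrees is then precisely what remains to be proved, and citing \cref{xhar7u} does not prove it. Likewise, ``\(\lambda\)-ring maps extended to negative degrees'' has no vector-bundle meaning to appeal to.

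The missing idea is the going-down criterion \cref{going-down}. The paper applies it with \(n=-2j\) to \(b_{j,X}\otimes\QQ\), after passing to \(\mathbb{A}^1\)-invariant cohomology by \cref{bouis}, on countable-weight compacta where both sides are cd sheaves by \cref{x0uwwc}. Compatibility is then needed only in the edge degree, i.e.\ on \(K_0\). There it is elementary: Serre--Swan commutes with exterior powers, and every class is pulled back from universal idempotents over a smooth \(\QQ\)-algebra (\cref{rational-edge}). The degrees \(i>2j\) are handled by local vanishing rather than by any global comparison. A motivic class dies on a resolution \(\Spec B\to\Spec A\), because \(H^i(B;\QQ(j))\simeq\CH^j(B,2j-i)\otimes\QQ=0\), and this cover refines to a cd cover by \cref{covers}. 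Betti classes die on finite closed covers by \cref{x557y5}. In effect, the paper reproves the needed part of Cortiñas--Thom for the specific map \(b_{j,X}\).

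Your surjectivity argument for \(i=2j-1\) takes a genuinely different route from the paper. The paper uses \cite{mot-oka}: lisse and Lichtenbaum cohomology agree rationally, and the cycle map there is surjective. Your route avoids that input, but as written it rests on the same unproved claim that the comparison respects Adams eigenspaces. Here the claim can be repaired by the trick of \cref{rational-edge}. An invertible matrix over \(\Cls{C}(X)\) is pulled back from the tautological matrix over the smooth \(\QQ\)-algebra \(\mathcal{O}(\mathrm{GL}_{n})\). By naturality of the splitting and of \(b_j\), you then only need a classical fact on \(\mathrm{GL}_n(\CC)\): Betti realization sends the weight-\(j\) part of the tautological class in \(K_1(\mathcal{O}(\mathrm{GL}_{n}))_{\QQ}\) to a nonzero multiple of the degree-\((2j-1)\) part of the topological Chern character. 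Combine this with the rational Chern character isomorphism \(\ku^{-1}(X)_{\QQ}\simeq\bigoplus_j H^{2j-1}_{\Bet}(X;\QQ)\). This gives surjectivity without any \(\lambda\)-ring compatibility on \(\Cls{C}(X)\) itself.
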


The finite-coefficient input is stronger:

\begin{Theorem}\label{main-p}
  Let~\(X\) be a compactum, \(j\geq0\) an integer,
  and \(p\) a prime.
  Reduction of~\(b_{j,X}\) modulo~\(p\) gives an equivalence
  \begin{equation*}
    \FF_{p}(j)^{\mot}(\Cls{C}(X))
    \simeq
    \Gamma_{\Bet}(X;\FF_{p}(j))
  \end{equation*}
  in~\(\D(\FF_p)\).
\end{Theorem}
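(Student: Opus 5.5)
Over a characteristic-zero field, motivic cohomology with \(\FF_p\)-coefficients is governed by Suslin rigidity and the Beilinson–Lichtenbaum (norm residue) theorem. The plan is to use these to reduce \cref{main-p} to a statement about étale cohomology of \(\Cls{C}(X)\), and then to identify that étale cohomology with Betti cohomology of~\(X\).

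The first step is to use that, for \(\QQ\)-algebras, the Elmanto–Morrow motivic cohomology with finite coefficients agrees with the cdh-local (equivalently \(\AA^1\)-invariant) theory. So \(\FF_p(j)^{\mot}(A)\simeq \FF_p(j)^{\cdh}(A)\), since the difference between the two is controlled by cyclic homology, which is rational. We then invoke the Beilinson–Lichtenbaum comparison in its cdh form: \(\FF_p(j)^{\cdh}\) is the cdh sheafification of \(\tau^{\leq j}R\Gamma_{\et}(\X;\mu_p^{\otimes j})\). Since \(\CC\subseteq A\), we have \(\mu_p^{\otimes j}\simeq\FF_p\), and \(b_{j,X}\) modulo~\(p\) becomes, after the choice of generator, a comparison between this object and \(\Gamma_{\Bet}(X;\FF_p)\).

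The second step is étale cohomology of \(\Cls{C}(X)\). I would aim to show
\[
  R\Gamma_{\et}(\Cls{C}(X);\FF_p)\simeq\Gamma_{\Bet}(X;\FF_p).
\]
The argument runs as follows.
\begin{conenum}
\item The strict henselizations of \(\Cls{C}(X)\) at maximal ideals are controlled by the local rings \(\Cls{C}(X)_x\), i.e.\ germs of functions.
\item These local rings are henselian with algebraically closed residue field~\(\CC\) (this is Gabber-style rigidity; root extraction for functions close to constants), so their étale cohomology vanishes in positive degrees.
\item The Zariski-type site of \(\Spec\Cls{C}(X)\) has the same \(\FF_p\)-cohomology as~\(X\), since \(\Spec\) maps to~\(X\) with connected, contractible-type fibers.
\end{conenum}
By a colimit/continuity argument in~\(X\), writing \(X\) as a cofiltered limit of finite CW complexes, it suffices to compare with the affine analytic picture.

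The main obstacle I expect is the cdh sheafification. \(\Cls{C}(X)\) is non-noetherian, so one must show that cdh sheafification and truncation \(\tau^{\leq j}\) do not destroy the identification in degrees above~\(j\). This is exactly why \cref{main-p} is claimed in all degrees rather than just \(i\geq 2j\). My first attempt is to prove that \(\Cls{C}(X)\) behaves like a cdh-local ring, i.e.\ that it is "valuation-like" at every point. I would try to show that every finitely presented \(\Cls{C}(X)\)-algebra admits cdh covers that are refined by open covers of~\(X\) with partitions of unity, using that the rings of germs are local with divisibility properties akin to valuation rings. On cdh points, which are henselian valuation rings with residue field containing~\(\CC\), \(\tau^{\leq j}\) becomes harmless, because their \(\FF_p\) étale cohomological dimension is zero after strict henselization. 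Combining these would give \(\FF_p(j)^{\cdh}(\Cls{C}(X))\simeq\Gamma_{\Bet}(X;\FF_p)\), compatibly with~\(b_{j,X}\).
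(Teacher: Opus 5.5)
Your first reduction is sound: for a $\QQ$-algebra the fiber of $\ZZ(j)^{\mot}\to\ZZ(j)^{\cdh}$ is rational. So modulo~$p$ you may pass to $\FF_p(j)^{\mathbb{A}}$ and use \cref{e:bl}, and for this statement you do not even need \cref{bouis}.

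The gap is the step you flag yourself. What has to be computed is $L_{\cdh}\tau^{\leq j}\Gamma_{\et}(\X;\mu_p^{\otimes j})$ evaluated at $\Cls{C}(X)$. Your step~2 on untruncated étale cohomology of $\Cls{C}(X)$ is of no use unless you also prove that the truncation disappears after cdh sheafification at this particular ring, and your mechanism for that does not work:
\begin{itemize}
\item A stalkwise argument would show that $\tau^{\leq j}\Gamma_{\et}\to\Gamma_{\et}$ is a cdh-local equivalence on all finitely presented $\Cls{C}(X)$-schemes. That is false. The field $\CC(t_1,\dots,t_{j+1})$, a point of $\mathbb{A}^{j+1}_{\Cls{C}(X)}$ over a maximal ideal, is a henselian valuation ring with nonzero mod~$p$ cohomology in degree $j+1$.
\item What matters at a cdh point is the Galois cohomology of its residue field. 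Vanishing after strict henselization holds for every local ring and says nothing about $\tau^{\leq j}$.
\item Nor can cdh covers of $\Spec\Cls{C}(X)$ be ``refined by covers of $X$''. The ideal of a closed subset of $X$ is in general not finitely generated, so a finite closed cover of $X$ is not a cdh cover of $\Spec\Cls{C}(X)$, and descent along it does not follow from cdh descent.
\item Your continuity step has a related flaw. $\Cls{C}(\lim_\alpha X_\alpha)$ is the uniform completion of $\operatorname{colim}_\alpha\Cls{C}(X_\alpha)$, not the colimit, as the Cantor set shows. Only $\aleph_1$-filtered colimits are preserved.
\end{itemize}

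The missing idea is to stop working on $\Spec\Cls{C}(X)$ and work on compacta instead.
\begin{itemize}
\item The paper observes that $X\mapsto\FF_p(j)^{\mathbb{A}}(\Cls{C}(X))$ is a cd sheaf on $\Cat{Cpt}_{\aleph_1}$ (\cref{x0uwwc}). This holds because $\Cls{C}(\X)$ sends abstract blowup squares to Milnor squares and $\ZZ(j)^{\mathbb{A}}$ satisfies Milnor excision.
\item Local vanishing is then checked on finite-type models. By finitarity, a class in degree $i>0$ comes from some finite-type $\QQ$-algebra $A$ mapping to $\Cls{C}(X)$. By \cref{e:bl} it dies on an éh cover of $\Spec A$: étale-locally if $0<i\leq j$, cdh-locally if $i>j$. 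This is exactly where the truncation gets absorbed.
\item \cref{covers} then refines the pulled-back cover to a cd cover of $X$. It rests on the nontrivial comparison of cdh covers with cd covers from the earlier paper.
\end{itemize}
Together with the degree-$0$ comparison via idempotents, and the fact that Betti classes die on finite closed covers, the going-down lemma (\cref{going-down}) with $n=0$ yields the equivalence. No computation of $\Gamma_{\et}(\Spec\Cls{C}(X))$, and no henselian property of germ rings, is needed.
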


\begin{proof}[Proof of \cref{main}]
  We write~\(F\)
  for the fiber of~\(b_{j,X}\).
  \Cref{main-p} shows that
  \(F\) is rational.
  \Cref{main-q} shows that
  \(\pi_{-i}F\simeq
  \pi_{-i}(F\otimes\QQ)\) vanishes
  for \(i\geq2j\).
\end{proof}

We prove \cref{main-q,main-p}
in \cref{s:proof}.
The bijectivity statements in both cases
are based
on the going-down argument of~\cite[Section~8]{k-ros-1},
which we review in \cref{ss:cd}.
The surjectivity statement in \cref{main-q}
follows from~\cite{mot-oka}.

\subsection*{Acknowledgments}\label{ss:ack}

I thank Peter Scholze for helpful discussions.
The results were largely obtained when I was
at the Max Planck Institute for Mathematics,
although the proof here is much simpler.
This work was supported by the Danish National Research Foundation
through the Copenhagen Center for Geometry and Topology (DNRF151).

\subsection*{Convention}\label{ss:conv}

We do not decorate derived functors with~\(\mathrm{L}\) or~\(\mathrm{R}\);
in particular, \(\Gamma\) denotes cohomology.

\section{Preparation}\label{s:preparation}

In \cref{ss:cd},
we recall the cd~topology of~\cite{k-ros-1}.
In \cref{ss:motivic},
we recall motivic cohomology.

\subsection{The cd topology}\label{ss:cd}

We recall the part of the cd~topology
from~\cite[Sections~3, 4, and~8]{k-ros-1}
that will be used below.
We write~\(\Cat{Cpt}_{\aleph_1}\)
for the category of compacta of countable weight.
A family~\(\{X_{i}\to X\}_{i}\) in~\(\Cat{Cpt}_{\aleph_1}\)
is a \emph{cd~cover}
if there is a filtration
by closed subsets
\begin{equation*}
  \emptyset=Z_0\subset Z_1\subset\dotsb\subset Z_n=X
\end{equation*}
such that, for every~\(1\leq k\leq n\), there is a map
\(q_k\colon Y_k\to Z_k\) such that the composite
\(Y_k\to Z_k\hookrightarrow X\) factors
through one of the maps \(X_i\to X\),
and \(q_k\) induces a homeomorphism
\begin{equation*}
  Y_k\setminus q_k^{-1}(Z_{k-1})
  \simeq
  Z_k\setminus Z_{k-1}.
\end{equation*}
The cd topology is the Grothendieck topology
generated by these covers;
see~\cite[Definitions~3.2 and~3.4]{k-ros-1}.
In particular,
every finite closed cover is a cd~cover.

Equivalently, by~\cite[Lemma~3.10]{k-ros-1},
the cd~topology is generated
by abstract blowup squares
of compacta.
More precisely, a square
\begin{equation*}
  \begin{tikzcd}
    Z'\ar[d]\ar[r]&
    X'\ar[d]\\
    Z\ar[r]&
    X
  \end{tikzcd}
\end{equation*}
in \(\Cat{Cpt}_{\aleph_1}\)
is an \emph{abstract blowup square} if it is cartesian,
\(Z\to X\) is a closed embedding, and
\(X'\setminus Z'\to X\setminus Z\) is a homeomorphism.
A presheaf on~\(\Cat{Cpt}_{\aleph_1}\)
is a cd~sheaf precisely
when it takes the empty set to a final object
and sends every such square to a cartesian square
by~\cite[Theorem~3.9]{k-ros-1}.

The following says that complex points carry éh~covers
to cd~covers
after pulling back to a compactum:

\begin{lemma}\label{covers}
  Let~\(A\) be a \(\QQ\)-algebra of finite type
  and \(X\to(\Spec A)(\CC)\) a continuous map
  from a compactum of countable weight.
  The pullback of every éh~covering family of~\(\Spec A\)
  admits a refinement
  by a cd~cover.
\end{lemma}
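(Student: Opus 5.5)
The plan is to reduce to generators of the éh~topology, check each one directly, and then use that refinability of covering families is closed under the operations generating a topology. By definition, the éh~topology on finite type \(\QQ\)-schemes is generated by étale covering families together with abstract blowup squares, that is, proper maps \(X'\to \Spec A\) that are isomorphisms away from a closed subscheme \(Z\), with \(\{X'\to\Spec A, Z\to \Spec A\}\) a covering family. In characteristic zero one may also take the cdh generators plus étale covers. So the first step is to show that for each generating family \(\{U_i\to\Spec A\}\), the family of fiber products \(\{X\times_{(\Spec A)(\CC)}U_i(\CC)\to X\}\) is refined by a cd~cover. The second step is to show that the class of families on \(X\) that are refinable by cd~covers is stable under composition and refinement, and stable under pullback along maps of compacta. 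This lets the statement pass from generating families to all éh~covering families; these families are obtained from generators by iterated composition and refinement.

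For the étale generators, I will use that an étale map \(U\to\Spec A\) induces a local homeomorphism \(U(\CC)\to(\Spec A)(\CC)\), and a jointly surjective étale family induces a jointly surjective family of local homeomorphisms. Pulled back to \(X\), this gives a family of local homeomorphisms \(X_i\to X\) with images covering \(X\), and every point of \(X\) has a neighborhood over which some \(X_i\) has a section. By compactness, finitely many closed neighborhoods \(K_1,\dots,K_m\) suffice, each admitting a section into some \(X_i\). Then \(Z_k=K_1\cup\dots\cup K_k\) with \(Y_k=K_k\) gives a cd~cover in the sense above, essentially the statement that finite closed covers are cd~covers. The fiber products \(X\times U_i(\CC)\) need not be compact, so I will take the \(K_k\) themselves, which are compact of countable weight as closed subsets of \(X\), as the refining cover.

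For the abstract blowup generator, let \(p\colon X'\to\Spec A\) be proper and an isomorphism over the complement of \(Z\). Then \(p\colon X'(\CC)\to(\Spec A)(\CC)\) is proper in the topological sense, and a homeomorphism over the complement of \(Z(\CC)\). Pulling back along \(X\to(\Spec A)(\CC)\), the space \(Y=X\times_{(\Spec A)(\CC)}X'(\CC)\) is compact, because it is proper over a compactum. It is also Hausdorff and of countable weight, being a closed subspace of \(X\times X'(\CC)\) with \(X'(\CC)\) second countable. Together with the closed subset \(W=X\times_{(\Spec A)(\CC)}Z(\CC)\), the cartesian square is then an abstract blowup square of compacta, which is a cd~cover with the filtration \(\emptyset\subset W\subset X\).

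The main obstacle I expect is the bookkeeping in the second step. For composition, given a cd~cover \(\{X_i\to X\}\) and, for each \(i\), families on \(X_i\), I will use that cd~covers form a Grothendieck topology, since by definition the cd~topology is generated by these covers, and that the generating families are stable under base change to any compactum mapping to \((\Spec A)(\CC)\). The subtle point is making sure that the intermediate spaces used in composing, namely the refined \(K_k\) and the compact pullbacks \(Y\), again map continuously to the complex points of the relevant finite type schemes. Only then can the next step of the éh~cover be pulled back, and they must remain compacta of countable weight. I would handle this by an induction on the length of the presentation of the éh~cover as an iterated composite of generators, carrying the maps to complex points along at each stage.
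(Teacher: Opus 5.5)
Your proposal is correct and follows the paper's proof: you reduce to étale and cdh (abstract blowup) generators, and you handle étale families exactly as the paper does, via local sections of the local homeomorphism \(U(\CC)\to(\Spec A)(\CC)\) and a finite closed refinement. The only difference is that the paper simply cites \cite[Theorem~4.4]{k-ros-1} for cdh covers, whereas you check directly that a pulled-back abstract blowup square is an abstract blowup square of compacta (using topological properness of \(X'(\CC)\to(\Spec A)(\CC)\)), and you spell out the composition bookkeeping that the paper leaves implicit; that bookkeeping rightly requires running the induction over all finite type (separated) \(\QQ\)-schemes rather than only affine ones.
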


\begin{proof}
  By definition,
  the éh~topology is generated by the cdh and étale topologies,
  so it is enough to consider covers in either topology.
  For the cdh~topology,
  the assertion follows from~\cite[Theorem~4.4]{k-ros-1}.

  Now consider an étale covering family.
  An étale map is a local homeomorphism on complex points.
  The local sections of the given étale covering family
  therefore yield an open cover of~\(X\).
  By compactness and normality,
  it has a finite closed refinement,
  which is a cd~cover.
\end{proof}

We will use the following going-down lemma
from~\cite[Proposition~8.5]{k-ros-1}:

\begin{lemma}\label{going-down}
  Let \(E\to E'\) be a map of spectrum-valued cd~sheaves
  on \(\Cat{Cpt}_{\aleph_1}\).
  Suppose the following:
  \begin{conenum}
    \item\label{i:edge}
      For every~\(X\),
      the map
      \(\pi_{n}(E(X))\to\pi_{n}(E'(X))\) is an isomorphism.
    \item\label{i:triv}
      Every class in
      \(\pi_{m}(E(X))\) or~\(\pi_{m}(E'(X))\), for~\(m<n\),
      is killed by a cd~cover.
  \end{conenum}
  Then
  \begin{equation*}
    \tau_{\leq n}(E(X))\to\tau_{\leq n}(E'(X))
  \end{equation*}
  is an equivalence for every~\(X\).
\end{lemma}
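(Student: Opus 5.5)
We prove this by induction on the cofiber, using the cd~descent of both sheaves to move between compacta and points. Let \(C\) denote the cofiber of \(E\to E'\), a spectrum-valued cd~sheaf; cofibers of sheaves are sheaves in the stable setting. It suffices to show that \(\pi_m C(X)=0\) for all \(m\leq n\) and all~\(X\). Indeed, by the long exact sequence
\[
\pi_n E(X)\to\pi_n E'(X)\to\pi_n C(X)\to\pi_{n-1}E(X)\to\pi_{n-1}E'(X)\to\dotsb,
\]
this vanishing gives isomorphisms on \(\pi_m\) for \(m<n\) and surjectivity on \(\pi_n\); together with \eqref{i:edge} this yields the equivalence on \(\tau_{\leq n}\).

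The first step uses \eqref{i:edge} and \eqref{i:triv}. A class \(c\in\pi_m C(X)\) with \(m\leq n\) maps to \(\pi_{m-1}E(X)\). Since \(m-1<n\), this image is killed on a cd~cover, so after refining, \(c\) lifts locally to \(\pi_m E'\). If \(m<n\), then \(\pi_m E'\) is itself locally trivial by \eqref{i:triv}; if \(m=n\), the lift comes from \(\pi_n E\) by \eqref{i:edge}. In either case the lift maps to zero in~\(C\). Hence every class in \(\pi_m C(X)\) with \(m\leq n\) is killed by a cd~cover, since composites and refinements of cd~covers are cd~covers.

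The second step, which I expect to be the main obstacle, converts this local vanishing into global vanishing using descent. The plan is to argue by descending induction on~\(m\), or rather by a descent spectral sequence argument. Because \(C\) is a cd~sheaf, \(C(X)\) is the limit over a hypercover. The sheafified homotopy groups \(\underline{\pi}_m C\) vanish for \(m\leq n\) by the first step, so \(C\) is \(n\)-connective as a sheaf, meaning \(\tau_{\leq n}\) of the sheaf is zero. To deduce that the sections \(C(X)\) are \(n\)-connective, we need the cd~topos to have cohomological dimension zero in a suitable sense, or at least to be hypercomplete with bounded cohomological dimension so that connectivity passes to global sections.

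For the cd~topology on compacta of countable weight, \cite{k-ros-1} shows that the points are essentially the one-point spaces and that cd~sheaf cohomology on~\(X\) agrees with Čech-type cohomology built from finite closed covers. I would try to invoke the key input from \cite[Section~8]{k-ros-1}: a presheaf of abelian groups whose sections are all locally zero has vanishing higher cd~cohomology. Equivalently, sheafification of \(\underline{\pi}_m C\) is zero, so the descent spectral sequence \(H^s(X;\underline{\pi}_t C)\Rightarrow\pi_{t-s}C(X)\) has vanishing \(E_2\)-terms for \(t\leq n\). Since \(s\geq 0\) only lowers the degree, contributions to \(\pi_m\) for \(m\leq n\) come only from \(t=m+s\). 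Terms with \(t\leq n\) vanish. For terms with \(t>n\) and \(s>t-n\), we need a bound on cohomological dimension.

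Here is where the specific structure of cd covers should enter. I would first try to avoid the spectral sequence altogether and argue directly. Take \(c\in\pi_m C(X)\) and a killing cd~cover, and use the abstract blowup square description to write \(C(X)\) as an iterated pullback \(C(X)\simeq C(Z)\times_{C(Z')}C(X')\). We then induct on the length of the filtration \(Z_\bullet\). On each piece, \(c\) restricts to zero, so it comes from \(\pi_{m+1}C(Z')\) via the Mayer–Vietoris boundary. Since \(m+1\) may exceed~\(n\), this class is not controlled. The fix is to run the induction simultaneously on \(E\) and~\(E'\) at the level of \(\pi_{n}\), where \eqref{i:edge} says the map is an isomorphism on all~\(X\). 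This lets us compare the Mayer–Vietoris boundaries \(\pi_{n}(-)(Z')\to\pi_{n-1}(-)(X)\) for \(E\) and~\(E'\), yielding the five-lemma step in degree \(n-1\). Descending from there degree by degree, each step uses a cover killing the relevant classes and the five lemma on the Mayer–Vietoris sequences.
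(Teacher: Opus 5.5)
Your proposal has a genuine gap: the global step is never carried out, and the routes you outline for it would not work as stated.

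\textbf{Why the cofiber route cannot finish.} The reduction to the cofiber \(C\) is correct, and so is your first step (every class in \(\pi_mC(X)\) with \(m\leq n\) dies on a cd~cover). But this local vanishing does not imply global vanishing for a cd~sheaf. For example, \(\Sigma^{n+1}\Gamma_{\Bet}(\X;\ZZ)\) has every class in degrees \(\leq n\) killed by a cd~cover (\cref{x557y5}), yet its \(\pi_n\) on the circle is \(H^1_{\Bet}(S^1;\ZZ)\neq0\). The same example shows why the descent-spectral-sequence route fails. cd~cohomology does not vanish in positive degrees, and the terms \(H^s(X;\underline{\pi}_tC)\) with \(t>n\) are completely uncontrolled. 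The ``key input'' you attribute to Section~8 of \cite{k-ros-1} is vacuous, since such a presheaf sheafifies to zero, and it says nothing about those terms. The real issue is that hypothesis~(i) is a statement about sections on \emph{every} compactum. Passing to \(C\) and to local statements throws that away. Your Mayer--Vietoris attempt for \(C\) stalls for the same reason, at the uncontrolled class in \(\pi_{m+1}C(Z')\).

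\textbf{What the last paragraph is missing.} Your final paragraph has the right shape: downward induction on \(m\), starting from (i) at \(m=n\), using Mayer--Vietoris for abstract blowup squares, and working with \(E\to E'\) rather than \(C\). But ``the five lemma on the Mayer--Vietoris sequences'' is circular as stated. For a square \((X,Z,X',Z')\), the five lemma in degree~\(m\) already needs bijectivity in degree~\(m\) on \(Z\) and \(X'\). The missing argument is as follows.

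Assume \(m<n\) and that \(\pi_{m+1}E(Y)\to\pi_{m+1}E'(Y)\) is bijective for all~\(Y\). Write bars for images in~\(E'\). Take a class vanishing on a cd~cover with filtration \(Z_0\subset\dotsb\subset Z_\ell=X\), and induct on~\(\ell\). Use the abstract blowup square formed by \(Z_{\ell-1}\hookrightarrow X\), \(q_\ell\colon Y_\ell\to X\) and \(Z'=q_\ell^{-1}(Z_{\ell-1})\). The \(Y_k\) with \(k<\ell\) form a shorter cd~cover of \(Z_{\ell-1}\).
\begin{itemize}
\item \emph{Injectivity.} Let \(c\in\pi_mE(X)\) with \(\bar c=0\). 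By induction \(c|_{Z_{\ell-1}}=0\), and \(c|_{Y_\ell}=0\), so \(c=\partial d\) for some \(d\in\pi_{m+1}E(Z')\). Since \(\partial'\bar d=0\), bijectivity in degree \(m+1\) puts \(d\) in the image of \(\pi_{m+1}E(Z_{\ell-1})\oplus\pi_{m+1}E(Y_\ell)\). Hence \(c=0\).
\item \emph{Surjectivity.} Let \(c'\in\pi_mE'(X)\). By induction, lift \(c'|_{Z_{\ell-1}}\) to some~\(a\). Then \(a|_{Z'}\) maps to \(c'|_{Z'}=0\). Injectivity in degree~\(m\) is now known on all compacta, using (ii) for~\(E\), so \(a|_{Z'}=0\). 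Thus \((a,0)\) glues to some \(c\in\pi_mE(X)\). Now \(c'-\bar c\) vanishes on \(Z_{\ell-1}\) and on \(Y_\ell\), so it equals \(\partial'\) of a class in \(\pi_{m+1}E'(Z')\). That class lifts to~\(E\) by surjectivity in degree \(m+1\).
\end{itemize}
Proving injectivity on all compacta before surjectivity is what replaces the five lemma. Hypothesis (ii) for~\(E\) feeds the injectivity step, and (ii) for~\(E'\) feeds the surjectivity step.
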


\begin{example}\label{x557y5}
  Let~\(X\) be a compactum of countable weight.
  For~\(i>0\),
  every class in~\(H^i_{\Bet}(X;M)\)
  is killed by a finite closed cover of~\(X\),
  for any abelian group~\(M\).
  Indeed,
  the class is pulled back from a polyhedron,
  where a finite closed
  cover by contractible subsets kills it;
  see~\cite[Proposition~1.7]{k-ros-1}.
\end{example}

\subsection{Motivic cohomology}\label{ss:motivic}

For a \(\QQ\)-algebra~\(A\), let~\(\ZZ(j)^{\mot}(A)\)
denote the motivic cohomology of
Elmanto–Morrow~\cite{ElmantoMorrow}. Their construction glues the
\(\AA^1\)-motivic filtration on homotopy \(K\)-theory to the
Hochschild–Kostant–Rosenberg filtration on negative cyclic homology
along their cdh~sheafifications.
It comes with a natural comparison
\begin{equation*}
  \ZZ(j)^{\mot}(A)\to\ZZ(j)^{\AA}(A),
\end{equation*}
where the target is the \(\AA^1\)-invariant motivic cohomology of
Bachmann–Elmanto–Morrow~\cite{BachmannElmantoMorrow}; see
also~\cite[Sections~1.1–2]{ElmantoMorrow}.
In general,
for an abelian group~\(\Lambda\) and integers~\(i\) and~\(j\),
we write
\begin{align*}
  \Lambda(j)^{\X}&=\ZZ(j)^{\X}\otimes_{\ZZ}\Lambda,&
  H^i_{\X}(A;\Lambda(j))&=\pi_{-i}(\Lambda(j)^{\X}(A)).
\end{align*}

The \(K\)-regularity of~\(\Cls{C}(X)\)
is proven in~\cite[Theorem~1.5]{CortinasThom12};
see~\cite[Remark~8.2]{CortinasThom12}
for the earlier work of Rosenberg and Friedlander–Walker.
The corresponding statement for motivic cohomology follows from this;
see~\cite[Theorem~6.1]{Bouis26}:

\begin{theorem}[Bouis]\label{bouis}
  For every compactum~\(X\) and every~\(j\geq0\),
  the map
  \(\ZZ(j)^{\mot}(\Cls{C}(X))\to\ZZ(j)^{\AA}(\Cls{C}(X))\)
  is an equivalence in~\(\D(\ZZ)\).
\end{theorem}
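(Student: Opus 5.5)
The plan is to deduce \cref{bouis} from the $K$-regularity of~$\Cls{C}(X)$ (\cite[Theorem~1.5]{CortinasThom12}), using the way Elmanto–Morrow build~$\ZZ(j)^{\mot}$ as a pullback. The argument runs weight by weight and is not specific to C*-algebras. For a $\QQ$-algebra~$A$, their construction gives a natural cartesian square. Its top row is $\ZZ(j)^{\mot}(A)\to\ZZ(j)^{\cdh}(A)$, the cdh-local motivic cohomology. Its bottom row is the $j$th HKR graded piece of negative cyclic homology over~$\QQ$ mapping to its cdh sheafification. In characteristic zero, Bachmann–Elmanto–Morrow identify $\ZZ(j)^{\cdh}$ with $\ZZ(j)^{\AA}$, compatibly with the comparison map. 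Hence the fiber of $\ZZ(j)^{\mot}(A)\to\ZZ(j)^{\AA}(A)$ is the fiber of
\begin{equation*}
  \gr^j_{\mathrm{HKR}}\mathrm{HC}^-(A/\QQ)[-2j]\to(L_{\cdh}\gr^j_{\mathrm{HKR}}\mathrm{HC}^-(\X/\QQ))(A)[-2j].
\end{equation*}
Using the fiber sequence relating $\mathrm{HC}^-$, $\mathrm{HP}$ and $\mathrm{HC}$, and the cdh descent of~$\mathrm{HP}$ in characteristic zero, this can be rewritten as the fiber of $\gr^j\mathrm{HC}(A/\QQ)\to L_{\cdh}\gr^j\mathrm{HC}(A/\QQ)$, up to shift. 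That graded piece is $L\Omega^{<j}_{A/\QQ}$ shifted by $2j-1$. So it suffices to show that $L\Omega^{<j}_{\X/\QQ}\to L_{\cdh}L\Omega^{<j}_{\X/\QQ}$ is an equivalence on~$\Cls{C}(X)$ for every~$j$.

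Next I use $K$-regularity. By Cortiñas–Haesemeyer–Schlichting–Weibel, the fiber $K^{\mathrm{inf}}$ of $K\to\KH$ on $\QQ$-algebras is identified, up to shift, with the fiber of $\mathrm{HC}(\X/\QQ)\to L_{\cdh}\mathrm{HC}(\X/\QQ)$. $K$-regularity of~$\Cls{C}(X)$ gives $K(\Cls{C}(X))\simeq\KH(\Cls{C}(X))$. Therefore the total map $\mathrm{HC}(\Cls{C}(X)/\QQ)\to L_{\cdh}\mathrm{HC}(\Cls{C}(X)/\QQ)$ is an equivalence.

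It remains to pass from the total object to each graded piece. Over~$\QQ$, the HKR filtration on~$\mathrm{HC}$ splits naturally into Adams eigenspaces, $\mathrm{HC}\simeq\prod_j\gr^j\mathrm{HC}$. This splitting is functorial in the algebra, so it commutes with cdh sheafification; sheafification preserves the finite-stage structure, and the product is locally finite in each homotopical degree. A natural equivalence of products compatible with the splitting then restricts to an equivalence on each factor. This gives the required equivalence weight by weight, hence the vanishing of the fiber above for every $j\geq0$.

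I expect the main obstacle to be bookkeeping rather than ideas. One must check that the Elmanto–Morrow pullback square and the CHSW identification are compatible with the weight decomposition. One must also check that cdh sheafification, as a left exact localization, commutes with the relevant infinite products; each weight contributes only in a bounded range of degrees, so the product is a sum degreewise. Here I would simply cite Bouis's formulation, which packages exactly this: $K$-regularity of a $\QQ$-algebra implies that $\ZZ(j)^{\mot}\to\ZZ(j)^{\AA}$ is an equivalence for all~$j$.
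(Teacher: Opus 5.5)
Your proposal is correct and follows the same route as the paper, which simply feeds the $K$-regularity of $\Cls{C}(X)$ (Corti\~nas--Thom) into Bouis's Theorem~6.1; your unpacking of why that result holds is sound: the Elmanto--Morrow pullback square reduces the claim to cdh-locality of $L\Omega^{<j}_{\X/\QQ}$ on $\Cls{C}(X)$, and Corti\~nas--Haesemeyer--Schlichting--Weibel together with $K\simeq\KH$ gives cdh-locality of $\mathrm{HC}(\X/\QQ)$ there. The weight bookkeeping you worry about is lighter than you fear. Up to shift, $L\Omega^{<j}_{\X/\QQ}$ is a natural retract of $\mathrm{HC}(\X/\QQ)$, so its comparison map to its cdh sheafification is a retract of an equivalence; you therefore need neither commutation of $L_{\cdh}$ with products nor compatibility of the CHSW identification with weights.
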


Therefore,
in this paper, we can work with \(\AA^1\)-invariant motivic cohomology.

We recall the properties of \(\ZZ(j)^{\AA}\).
It is a finitary cdh~sheaf
by~\cite[Theorem~1.1\,(3)]{BachmannElmantoMorrow}.
It is represented by an absolute motivic spectrum
by~\cite[Theorem~1.6]{BachmannElmantoMorrow},
so it satisfies Milnor
excision by~\cite[Corollary~1.2]{EHIK21M}.
If a prime~\(p\) is invertible in~\(A\),
we have the Beilinson–Lichtenbaum equivalence of cdh~sheaves
\begin{equation}\label{e:bl}
  \FF_p(j)^{\AA}
  \simeq
  L_{\cdh}\tau_{\geq-j}
  \Gamma_{\et}(\X;\mu_p^{\otimes j}),
\end{equation}
by~\cite[Theorem~1.1\,(4)]{BachmannElmantoMorrow}.
After rationalization,
the motivic filtration splits naturally into
Adams weights:
\begin{equation}\label{e:splitting}
  \fil_{\mot}^{*}{\KH(A)\otimes\QQ}
  \simeq
  \bigoplus_{j\geq *}
  \QQ(j)^{\AA}(A)[2j].
\end{equation}
Here, for every integer~\(a\geq1\),
the Adams operation~\(\psi^a\) acts
on the weight-\(j\) summand by~\(a^j\);
see~\cite[Theorems~4.47 and~4.48]{BachmannElmantoMorrow}.

We finish by constructing the comparison map~\(b_{j,X}\).
Let~\(\ZZ(j)^{\lis}\) denote the left Kan extension of classical
motivic cohomology from smooth \(\QQ\)-algebras.
On \(\QQ\)-algebras of finite type,
\(\ZZ(j)^{\AA}\) is
the cdh~sheafification of~\(\ZZ(j)^{\lis}\)
by~\cite[Theorem~7.5, Remark~7.9, and
Corollary~1.11]{BachmannElmantoMorrow}.
Classical Betti realization gives a natural transformation
\begin{equation*}
  \ZZ(j)^{\lis}(\X)
  \to
  \Gamma_{\Bet}((\Spec\X)(\CC);\ZZ(j))
\end{equation*}
for \(\QQ\)-algebras of finite type.
The target satisfies cdh~descent
by~\cite[Theorems~4.4 and~3.11]{k-ros-1}.
Consequently,
Betti realization factors through cdh~sheafification.
For every \(\QQ\)-algebra~\(A\) of finite type, it defines
\begin{equation}
  \ZZ(j)^{\AA}(A)
  \to
  \Gamma_{\Bet}((\Spec A)(\CC);\ZZ(j)).
\end{equation}

\begin{definition}\label{xou9xb}
  Let \(X\) be a compactum
  and \(j\geq0\) an integer.
  For a finitely generated \(\QQ\)-subalgebra~\(A\subset\Cls{C}(X)\),
  we consider
  the composite
  \(\ZZ(j)^{\AA}(A)\to\Gamma_{\Bet}((\Spec A)(\CC);\ZZ(j))\to
  \Gamma_{\Bet}(X;\ZZ(j))\),
  where the first map is the one above.
  By considering the filtered colimit over all such~\(A\),
  we obtain the comparison map:
  \begin{equation*}
    b^{\AA}_{j,X}\colon
    \ZZ(j)^{\AA}(\Cls{C}(X))
    \to \Gamma_{\Bet}(X;\ZZ(j)).
  \end{equation*}
\end{definition}

\section{Proof}\label{s:proof}

We prove \cref{main-q,main-p}
in \cref{ss:rational,ss:finite},
respectively.
The main tool is \cref{going-down}.
We first note the following:

\begin{proposition}\label{x0uwwc}
  Let~\(j\geq0\) be an integer
  and \(\Lambda\) an abelian group.
  The functors
  \(\Cat{Cpt}^{\op}\to\D(\ZZ)\) given by
  \begin{align*}
    X&\mapsto\Lambda(j)^{\AA}(\Cls{C}(X)),&
    X&\mapsto\Gamma_{\Bet}(X;\Lambda(j))
  \end{align*}
  preserve \(\aleph_{1}\)-filtered colimits
  and are cd~sheaves when restricted to~\(\Cat{Cpt}_{\aleph_1}\).
\end{proposition}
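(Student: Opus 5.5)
We treat the two functors separately; in each case we prove the colimit property first and then the sheaf property.

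\emph{Betti side.} Continuity of sheaf (Čech) cohomology gives \(\Gamma_{\Bet}(\lim_\alpha X_\alpha;\Lambda(j))\simeq\operatorname{colim}_\alpha\Gamma_{\Bet}(X_\alpha;\Lambda(j))\) for cofiltered limits of compacta. A filtered colimit in \(\Cat{Cpt}^{\op}\) is such a limit, so this gives the colimit property. For the sheaf property we check the criterion recalled in \cref{ss:cd}. Clearly \(\Gamma_{\Bet}(\emptyset)=0\). For an abstract blowup square we need the square of cohomologies to be cartesian. By the long exact sequences of pairs, this reduces to the excision isomorphism
\[
\Gamma_{\Bet}(X,Z)\simeq\Gamma_c(X\setminus Z)\simeq\Gamma_c(X'\setminus Z')\simeq\Gamma_{\Bet}(X',Z').
\]
This uses only proper base change for the proper map \(X'\to X\), or equivalently compactly supported cohomology of the homeomorphic open complements. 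Alternatively, this case is already covered by \cite[Theorem~3.11]{k-ros-1}, which was invoked above.

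\emph{Motivic side, colimit property.} First, \(X\mapsto\Cls{C}(X)\) sends cofiltered limits of compacta to filtered colimits of rings only up to completion. So we need care: \(\Cls{C}(\lim X_\alpha)\) is the closure of \(\operatorname{colim}\Cls{C}(X_\alpha)\), not the colimit itself. Given \(\aleph_1\)-filteredness, I would argue as follows. Every finitely generated \(\QQ\)-subalgebra \(A\subset\Cls{C}(\lim X_\alpha)\) is determined by finitely many functions, and each function is a uniform limit of a sequence of functions coming from the stages. By \(\aleph_1\)-filteredness, countably many approximants all come from a single stage \(X_\beta\). We then need each function itself to factor through some stage.

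Here I would use that a continuous map \(\lim X_\alpha\to\CC^n\) factors through some \(X_\beta\) whenever the system is \(\aleph_1\)-filtered. The reason is that the countably many approximants determine the map and already live at one stage \(\beta\). Indeed, the map \(\lim X_\alpha\to X_\beta\) is surjective when the transition maps are surjective, and we can reduce to that case by replacing each \(X_\alpha\) with its image. The limit of the approximants then descends to \(X_\beta\), because uniform Cauchyness is detected after pulling back along a surjection. Hence \(\Cls{C}(\lim X_\alpha)=\operatorname{colim}\Cls{C}(X_\alpha)\) for \(\aleph_1\)-filtered systems. Since \(\Lambda(j)^{\AA}\) is finitary, the colimit property follows.

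\emph{Motivic side, sheaf property.} The plan is to use Milnor excision. An abstract blowup square of compacta yields a square of rings
\[
\Cls{C}(X)\to\Cls{C}(X'),\ \Cls{C}(Z)\to\Cls{C}(Z').
\]
By Tietze extension, \(\Cls{C}(X)\to\Cls{C}(Z)\) is surjective. Moreover, \(\Cls{C}(X)\to\Cls{C}(X')\) maps the ideal \(I\) of functions vanishing on \(Z\) isomorphically onto the ideal \(I'\) of functions vanishing on \(Z'\). Injectivity holds because \(X'\to X\) is surjective: it is surjective over \(X\setminus Z\) by the homeomorphism, and over \(Z\) we may assume surjectivity, or else replace \(Z'\) by the preimage, the square being cartesian. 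Surjectivity holds because a function on \(X'\) vanishing on \(Z'\) descends to a continuous function on \(X\) vanishing on \(Z\), by compactness and the quotient property of the proper map. Such a square is therefore a Milnor square, and Milnor excision for \(\ZZ(j)^{\AA}\) gives the required cartesian square after tensoring with \(\Lambda\). Finally, \(\Cls{C}(\emptyset)=0\) gives the value \(0\).

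The main obstacle I expect is the colimit property: the subtlety that \(\Cls{C}\) of a limit is a completion rather than the colimit, and that only \(\aleph_1\)-filteredness repairs this. The continuity of the descended function across the surjection \(\lim X_\alpha\to X_\beta\) requires checking.
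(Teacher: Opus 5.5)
Your overall route is the same as the paper's: $\aleph_1$-continuity of $\Cls{C}(\X)$ together with finitarity of $\Lambda(j)^{\mathbb{A}}$, then Milnor squares and Milnor excision \cite[Corollary~1.2]{EHIK21M} on the motivic side, and continuity plus excision (or \cite[Theorem~3.11]{k-ros-1}) on the Betti side. The descent parts are fine, with one small muddle. For injectivity of $I\to I'$ you do not need $X'\to X$ to be surjective over $Z$, and in general it is not (e.g.\ $X=Z\sqcup W$, $X'=W$, $Z'=\emptyset$). An element of $I$ already vanishes on $Z$, and $X'\to X$ hits $X\setminus Z$.

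The genuine gap is in your proof that $\Cls{C}(\lim X_\alpha)\simeq\operatorname{colim}_\alpha\Cls{C}(X_\alpha)$. You only show that every function on the limit factors through some stage, which is surjectivity of the comparison map. You also need injectivity. Write $\pi_\beta\colon\lim X_\alpha\to X_\beta$ for the projection and $t_{\alpha\beta}\colon X_\alpha\to X_\beta$ for the transition maps. If $h\in\Cls{C}(X_\beta)$ satisfies $h\circ\pi_\beta=0$, you must show $h\circ t_{\alpha\beta}=0$ for some $\alpha\geq\beta$. Without this, finitarity of $\Lambda(j)^{\mathbb{A}}$ does not give the colimit statement.

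Your reduction ``replace each $X_\alpha$ by its image'' is exactly where injectivity is lost. It does not change the limit, and for the new system injectivity is automatic, but it can change $\operatorname{colim}_\alpha\Cls{C}(X_\alpha)$. Take $X_n=[0,1/n]$, with limit $\{0\}$. All the images are $\{0\}$, whereas $\operatorname{colim}_n\Cls{C}(X_n)$ is the ring of germs at $0$ of continuous functions on $[0,1]$. There surjectivity is trivial and injectivity is what fails.

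For $\aleph_1$-filtered systems the missing step is a second countability argument. By compactness, $\pi_\beta(\lim X_\alpha)=\bigcap_{\alpha\geq\beta}t_{\alpha\beta}(X_\alpha)$. Suppose $h$ vanishes on this set. Then, again by compactness, for each $n$ some $t_{\alpha_n\beta}(X_{\alpha_n})$ lies in the open set $\{|h|<1/n\}$. A common upper bound $\alpha$ of the countably many $\alpha_n$ gives $h\circ t_{\alpha\beta}=0$. Add this, and use Tietze extension to move the descended function from $\pi_\beta(\lim X_\alpha)$ back to $X_\beta$ in the surjectivity half; then your argument is complete.
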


\begin{proof}
  The functors \(\Cls{C}(\X)\),
  \(\ZZ(j)^{\AA}\),
  and \(\Gamma_{\Bet}(\X;\ZZ(j))\)
  preserve \(\aleph_{1}\)-filtered colimits.

  For cd~descent,
  we use the description by abstract blowup squares.
  The functor~\(\Cls{C}(\X)\) carries abstract blowup squares
  of compacta to Milnor squares of rings.
  Thus the first functor satisfies cd~desecnt
  by~\cite[Corollary~1.2]{EHIK21M},
  and the second by~\cite[Theorem~3.11]{k-ros-1}.
\end{proof}

\subsection{Rational coefficients}\label{ss:rational}

\begin{proof}[Proof of \cref{main-q}]
  For the isomorphism statement, by \cref{bouis,x0uwwc},
  it is enough to consider~\(b^{\AA}_{j,\X}\)
  on \(\Cat{Cpt}_{\aleph_{1}}^{\op}\).
  By \cref{x0uwwc}, we may apply \cref{going-down} to
  \begin{equation*}
    b^{\AA}_{j,\X}\otimes\QQ\colon
    \QQ(j)^{\AA}(\Cls{C}(\X))
    \to \Gamma_{\Bet}(\X;\QQ(j))
  \end{equation*}
  with~\(n=-2j\).
  The conditions~\cref{i:edge,i:triv}
  are verified in \cref{rational-edge,rational-triv} below.
  This gives the required isomorphism.

  For the surjectivity statement,
  by~\cite[Proposition~3.1]{mot-oka},
  lisse and Lichtenbaum motivic cohomology agree rationally.
  Under this identification, by construction,
  the composite
  \begin{equation*}
    H^{2j-1}_{\lis}(\Cls{C}(X);\QQ(j))
    \to
    H^{2j-1}_{\AA}(\Cls{C}(X);\QQ(j))
    \to
    H^{2j-1}_{\Bet}(X;\QQ(j))
  \end{equation*}
  is the rationalization of
  the cycle map in~\cite[Definition~2.10]{mot-oka}.
  Its surjectivity follows from~\cite[Theorem~C]{mot-oka}.
\end{proof}

\begin{lemma}\label{rational-edge}
  Let~\(X\) be a compactum and \(j\geq0\) an integer.
  Then~\(b^{\AA}_{j,X}\) induces an isomorphism
  \begin{equation*}
    H^{2j}_{\AA}(\Cls{C}(X);\QQ(j))
    \simeq
    H^{2j}_{\Bet}(X;\QQ(j)).
  \end{equation*}
\end{lemma}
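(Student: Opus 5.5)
The plan is to deduce the weight-\(j\) statement from the degree-zero case of \cref{xhar7u}, after rationalizing and splitting both sides into Adams eigenspaces. First, by \cref{bouis} and the \(K\)-regularity of~\(\Cls{C}(X)\), we have \(\KH(\Cls{C}(X))\simeq K(\Cls{C}(X))\). Taking \(\pi_0\) of the rational splitting~\cref{e:splitting} in filtration degree~\(0\) then gives
\begin{equation*}
  K_0(\Cls{C}(X))\otimes\QQ
  \simeq
  \bigoplus_{j\geq0}H^{2j}_{\AA}(\Cls{C}(X);\QQ(j)),
\end{equation*}
and \(\psi^a\) acts on the \(j\)th summand by~\(a^j\). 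On the topological side, the rationalized Postnikov filtration of~\(\ku\) also splits into Adams eigenspaces; equivalently, the Chern character gives
\begin{equation*}
  \ku^0(X)\otimes\QQ\simeq\bigoplus_{j\geq0}H^{2j}_{\Bet}(X;\QQ(j)),
\end{equation*}
with \(\psi^a\) acting by~\(a^j\) on the \(j\)th summand. The factor \((2\pi i)^j\) is absorbed by the twist~\(\QQ(j)\).

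Next, I would use that the Cortiñas–Thom isomorphism \(K_0(\Cls{C}(X))\simeq\ku^0(X)\) of \cref{xhar7u} is the one induced by the natural comparison from algebraic to topological \(K\)-theory. That comparison commutes with Adams operations, since these are defined on both sides by \(\lambda\)-operations on vector bundles. After rationalization it therefore restricts to isomorphisms on each \(a^j\)-eigenspace, so the two sides of the lemma are abstractly isomorphic for every~\(j\). Since \(K_0(\Cls{C}(X))\) is a colimit over finitely generated subalgebras~\(A\), all these compatibilities can be checked on finite type \(A\) mapping to~\(X\).

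The remaining and main step is to show that this eigenspace isomorphism is the map induced by~\(b^{\AA}_{j,X}\), up to a nonzero rational scalar; a nonzero scalar does not affect bijectivity. By the colimit description in \cref{xou9xb} and naturality, it suffices to treat \(\QQ\)-algebras~\(A\) of finite type. Both Betti realization on~\(\ZZ(j)^{\AA}\) and the topological comparison of \(\KH\) factor through cdh sheafification. So I would further reduce, via the left Kan extension description of~\(\ZZ(j)^{\lis}\), to smooth \(A\), and then to the compatibility of classical Betti realization of motivic cohomology with the Chern character. That compatibility is the statement that the cycle class of the weight-\(j\) Adams component of a bundle equals \(\operatorname{ch}_j\), up to the normalizations \((2\pi i)^j\) and~\(j!\).

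I expect the main obstacle to be making this compatibility precise: one has to know that Betti realization of the motivic filtration on~\(\KH\) lands in the Postnikov filtration of~\(\ku\), which then forces compatibility with the rational splittings. If a direct reference is unavailable, my fallback is to compare only the eigenspaces. The map \(b^{\AA}_{j,X}\otimes\QQ\) is a natural map between the \(a^j\)-eigenspaces, and the weight-\(j\) part of Betti realization on~\(\QQ(j)^{\AA}\) is nonzero. Already for \(X\) a point with \(j=0\), and for first Chern classes on \(\mathbf{P}^1\) with \(j=1\), it is nonzero. A multiplicativity argument would then pin it down as a nonzero multiple of the \(K\)-theoretic map.
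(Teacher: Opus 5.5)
Your first half (the rational splitting \cref{e:splitting}, $K$-regularity, \cref{xhar7u} in degree zero, and compatibility of the Serre--Swan comparison with Adams operations) matches the paper. It yields an \emph{abstract} isomorphism of the two groups. The content of the lemma is that this isomorphism is the one induced by $b^{\mathbb{A}}_{j,X}$, and here your proposal has a genuine gap.

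The reduction ``via the left Kan extension description to smooth $A$'' does not work. On a singular finite-type $A$, $\ZZ(j)^{\mathbb{A}}$ is the cdh sheafification of $\ZZ(j)^{\lis}$, and cdh descent mixes homotopy groups. So knowing that two maps agree on $\pi_{-2j}$ over smooth algebras tells you nothing about $\pi_{-2j}$ over $A$. You would need a homotopy-coherent, filtered compatibility between Betti realization of the motivic filtration and the Postnikov filtration of $\ku$. That is exactly the statement you flag as the main obstacle and leave unproved.

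The fallback does not close the gap either. Nonvanishing for $j\leq1$ plus ``multiplicativity'' does not force a natural map between the two eigenspaces to be a scalar multiple of the $K$-theoretic isomorphism. You would need three further inputs, none of which you supply:
\begin{itemize}
\item a rigidity argument, for instance that natural additive endomorphisms of $H^{2j}(\mathord{-};\QQ)$ on compacta are scalars, via finite skeleta of $K(\ZZ,2j)$ and continuity;
\item multiplicativity of $b$;
\item a nonvanishing check in every weight, not just $j\leq1$.
\end{itemize}

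The missing idea is that you never have to deal with arbitrary finite-type $A$, because all relevant classes are pulled back from a smooth universal example. Every class in $K_0(\Cls{C}(X))$ is a difference of classes of idempotent matrices. On each of the finitely many clopen rank strata, such a matrix is the pullback, along a ring map $B\to\Cls{C}(X)$, of the universal idempotent over the coordinate ring $B$ of the smooth affine $\QQ$-scheme of idempotents of fixed size and rank. Both the rational splitting and $b^{\mathbb{A}}$ are natural, so it suffices to understand the universal bundle over $B$. There two classical facts apply:
\begin{itemize}
\item the rational splitting agrees with the classical Adams decomposition, so the weight-$j$ part is the corresponding Chern character component;
\item Betti realization, i.e.\ the cycle class map, sends algebraic Chern classes to topological Chern classes.
\end{itemize}
Pullback and additivity then identify $\pi_{-2j}(b^{\mathbb{A}}_{j,X})$ with the weight-$j$ component of the Serre--Swan comparison. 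The rational topological Chern character identifies that component with $H^{2j}_{\Bet}(X;\QQ(j))$.

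Your final ingredient, that the cycle class of the weight-$j$ part equals $\operatorname{ch}_j$, is the right one. It just has to be applied to this universal smooth bundle, not reached through cdh sheafification.
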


\begin{proof}
  The rational splitting~\cref{e:splitting}
  identifies
  \(
    H^{2j}_{\AA}(\Cls{C}(X);\QQ(j))
  \)
  with the Adams weight-\(j\) summand
  of~\(\KH_0(\Cls{C}(X))\otimes\QQ\).
  By~\cref{xhar7u}
  and \(K\)-regularity,
  this is the weight-\(j\) summand
  of~\(\ku^0(X)\otimes\QQ\);
  here we use that
  the Serre–Swan comparison commutes
  with exterior powers
  and hence with Adams operations.

  We spell out compatibility
  with the actual map~\(b^{\AA}_{j,X}\).
  Every class in~\(K_0(\Cls{C}(X))\)
  is a difference of classes represented by idempotent matrices.
  On each of the finitely many clopen rank strata,
  such a matrix is classified
  by the smooth affine \(\QQ\)-scheme
  of idempotents of fixed rank.
  Thus the class is pulled back
  from a universal vector bundle
  over a smooth \(\QQ\)-algebra.
  There, the rational splitting agrees
  with the usual Adams decomposition
  by~\cite[Theorem~4.47\,(4)]{BachmannElmantoMorrow},
  while Betti realization carries the algebraic Chern classes
  of this bundle to the corresponding topological Chern classes
  by, e.g.,~\cite[Theorems~27 and~28]{FriedlanderWalker05}.
  Pullback and additivity show
  that~\(\pi_{-2j}(b^{\AA}_{j,X})\)
  is the weight-\(j\) component
  of the Serre–Swan comparison.
  The rational topological Chern character identifies this component
  with~\(H^{2j}_{\Bet}(X;\QQ(j))\)
  by, e.g.,~\cite[Theorems~V.3.25 and~V.3.27]{Karoubi78}.
\end{proof}

\begin{lemma}\label{rational-triv}
  Let~\(X\) be a compactum of countable weight
  and
  \(i\) and~\(j\) integers
  satisfying \(i>2j\geq0\).
  Every class in either
  \(H^i_{\AA}(\Cls{C}(X);\QQ(j))\)
  or \(H^i_{\Bet}(X;\QQ(j))\)
  is killed by a cd~cover.
\end{lemma}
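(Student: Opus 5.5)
The Betti half follows directly from \cref{x557y5}: since \(i>2j\geq0\) we have \(i>0\), so every class in \(H^i_{\Bet}(X;\QQ(j))\simeq H^i_{\Bet}(X;\QQ)\) is killed by a finite closed cover of~\(X\), and such a cover is a cd~cover. The remaining work is the motivic half. I would reduce it to a finite-type algebra and then apply the comparison of éh~covers with cd~covers from \cref{covers}.

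Let \(\alpha\in H^i_{\AA}(\Cls{C}(X);\QQ(j))\). The functor \(\QQ(j)^{\AA}\) is finitary, so \(\alpha\) is pulled back from a class \(\alpha_A\in H^i_{\AA}(A;\QQ(j))\) for some finitely generated \(\QQ\)-subalgebra \(A\subset\Cls{C}(X)\). Evaluation at points gives a continuous map \(X\to(\Spec A)(\CC)\). I now claim that \(\alpha_A\) is killed éh-locally on~\(\Spec A\). Rationally, \(\QQ(j)^{\AA}\) satisfies éh~descent on finite-type \(\QQ\)-schemes. One way to see this is that the rational splitting \cref{e:splitting} exhibits \(\QQ(j)^{\AA}\) as a summand of \(\KH\otimes\QQ\), which has étale descent rationally. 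Alternatively, on smooth schemes rational motivic cohomology agrees with rational étale motivic cohomology, and one can then cdh-sheafify. It therefore suffices to know that the éh~sheafification of \(\mathcal{H}^i(\QQ(j))\) vanishes for \(i>2j\). By cdh~descent and resolution of singularities, every cdh-local question reduces to smooth schemes, where the claim becomes a statement about local rings. For a smooth local ring (more precisely, an essentially smooth henselian local ring, or locally in the Nisnevich topology), motivic cohomology \(H^i(\QQ(j))\) vanishes for \(i>j\) (Gersten/Bloch–Kato-type vanishing), and \(j\leq 2j<i\). So \(\alpha_A\) is killed on a cdh cover, which is in particular an éh~cover.

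With such a cover \(\{U_k\to\Spec A\}\) in hand, I would pull it back along \(X\to(\Spec A)(\CC)\). \Cref{covers} refines the pullback by a cd~cover \(\{X_l\to X\}\), where each \(X_l\to X\) factors through some \(U_k(\CC)\). To see that \(\alpha\) dies on each~\(X_l\), I need a ring map from \(\Cls{O}(U_k)\) (or from a finitely generated algebra over which \(U_k\) is affine, after refining to affines) to \(\Cls{C}(X_l)\) that is compatible with \(A\to\Cls{C}(X)\to\Cls{C}(X_l)\). This map comes from evaluating functions on \(U_k\) along the map \(X_l\to U_k(\CC)\). Naturality of pullback then shows that \(\alpha|_{X_l}\) is the image of \(\alpha_A|_{U_k}=0\).

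I expect the main obstacle to be the local vanishing step, namely checking that the relevant sheafified cohomology vanishes in degrees \(i>2j\) for \(\QQ(j)^{\AA}\) and not merely for classical motivic cohomology. Concretely, this requires identifying \(\QQ(j)^{\AA}\) on finite-type algebras with the cdh~sheafification of \(\QQ(j)^{\lis}\), and then using cdh-local smoothness together with the vanishing of the Zariski (or Nisnevich) cohomology sheaves \(\mathcal{H}^i(\QQ(j))\) for \(i>j\) on smooth schemes. As a fallback, I would argue via the splitting \cref{e:splitting}: a class in weight~\(j\) and degree \(i>2j\) corresponds to \(\KH_{2j-i}\) with \(2j-i<0\), and negative \(\KH\)-groups vanish cdh-locally since \(\KH\) agrees with \(K\) on regular schemes.
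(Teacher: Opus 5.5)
Your proof is correct. Its architecture matches the paper's: use finitarity to descend to a finite-type \(A\), find a cdh cover of \(\Spec A\) killing the class, then transport that cover to a cd~cover of \(X\) via \cref{covers}. The difference lies in the vanishing input. The paper resolves singularities to obtain a cdh cover by a smooth \emph{affine} \(\Spec B\). It then observes that the whole group \(H^i(B;\QQ(j))\simeq\CH^j(B,2j-i)\otimes\QQ\) vanishes, since Bloch's cycle complex has no homology in negative degrees. Given the comparison with Bloch's complex this is immediate, and no further localization on \(\Spec B\) is needed. You instead use stalkwise vanishing of \(\mathcal{H}^i(\QQ(j))\) on smooth schemes for \(i>j\). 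That is a sharper range, but it rests on Gersten-type input (injectivity into the function field, plus vanishing for fields) and forces an extra Zariski refinement on the resolution. The hypothesis \(i>2j\) makes all of this unnecessary. Your fallback is in fact a global argument in the same spirit as the paper's: a weight-\(j\), degree-\(i\) class is a summand of \(\KH_{2j-i}\otimes\QQ\) via the splitting, and \(\KH=K\) has no negative groups on a regular ring. This kills all of \(H^i(B;\QQ(j))\) for smooth affine \(B\) at once, with \(K\)-regularity of regular rings replacing the cycle-complex description. Finally, your discussion of éh~descent for rational motivic cohomology is superfluous. Being ``killed by a cover'' concerns only the sheafification of the presheaf \(U\mapsto H^i(U;\QQ(j))\), so no descent property of the sheaf itself is used. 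And since your cover is cdh, only the cdh case of \cref{covers} is needed.
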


\begin{proof}
  Any Betti class is killed by a cd~cover by~\cref{x557y5}.

  Let~\(\alpha\in H^i_{\AA}(\Cls{C}(X);\QQ(j))\).
  By finitarity, the class is represented
  on a \(\QQ\)-algebra~\(A\) of finite type mapping to~\(\Cls{C}(X)\).
  Resolution of singularities gives a cdh~cover
  of~\(\Spec A\) by a smooth affine scheme~\(\Spec B\).
  There,
  \(\ZZ(j)^{\AA}(B)\) is described via Bloch’s cycle complex
  by~\cite[Theorem~1.1\,(8)]{BachmannElmantoMorrow},
  and hence
  \begin{equation*}
    H^i_{\AA}(B;\QQ(j))
    \simeq \CH^j(B,2j-i)\otimes\QQ=0
  \end{equation*}
  since~\(i>2j\).
  By~\cref{covers},
  the pullback of this cover to~\(X\)
  admits a refinement by a cd~cover,
  which kills the original class.
\end{proof}

\subsection{Finite coefficients}\label{ss:finite}

\begin{proof}[Proof of \cref{main-p}]
  By \cref{bouis,x0uwwc}, it is enough to consider
  \(b^{\AA}_{j,\X}\) on \(\Cat{Cpt}_{\aleph_{1}}^{\op}\).
  By \cref{x0uwwc}, we may apply \cref{going-down}
  to
  \begin{equation*}
    b^{\AA}_{j,\X}\otimes\SS/p\colon
    \FF_{p}(j)^{\AA}(\Cls{C}(\X))
    \to\Gamma_{\Bet}(\X;\FF_{p}(j))
  \end{equation*}
  with~\(n=0\).
  The conditions~\cref{i:edge,i:triv}
  are verified in \cref{finite-edge,finite-triv} below.
  The going-down criterion therefore
  gives an isomorphism on every nonpositive homotopy group.
  Since the source is \(0\)-truncated by~\cref{e:bl},
  the comparison is an equivalence.
\end{proof}

\begin{lemma}\label{finite-edge}
  Let~\(X\) be a compactum, \(j\geq0\) an integer,
  and \(p\) a prime.
  Then~\(b^{\AA}_{j,X}\) induces an isomorphism
  \begin{equation*}
    H^0_{\AA}(\Cls{C}(X);\FF_p(j))
    \simeq H^0_{\Bet}(X;\FF_p(j)).
  \end{equation*}
\end{lemma}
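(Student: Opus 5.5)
We will identify both sides with a single concrete object, namely locally constant functions on \(X\) with values in \(\mu_p^{\otimes j}\). The comparison map will then be checked to be the obvious identification.

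\emph{Source.} By the Beilinson–Lichtenbaum equivalence~\cref{e:bl}, \(H^0_{\AA}(\X;\FF_p(j))\) is the cdh sheafification of \(A\mapsto H^0_{\et}(\Spec A;\mu_p^{\otimes j})\). On a \(\QQ\)-algebra \(A\) of finite type, this is the group of \(\mu_p^{\otimes j}\)-valued functions on \(\pi_0(\Spec A)\). We want this after cdh sheafification, so we use that \(H^0_{\et}(\X;\mu_p^{\otimes j})\) is already a cdh sheaf in degree zero. Indeed, \(\pi_0\) is computed on the underlying topological space, and cdh covers are universally surjective, so locally constant functions descend. Since cdh sheafification of a complex concentrated in degrees \(\geq0\) does not change \(H^0\) of a sheaf, we get \(H^0_{\AA}(A;\FF_p(j))\simeq\mu_p^{\otimes j}(\CC)^{\pi_0(\Spec A)}\), after fixing \(\mu_p(\CC)\) by choosing \(e^{2\pi i/p}\). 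The group \(H^0_{\AA}\) is finitary, so taking the filtered colimit over finitely generated \(A\subset\Cls{C}(X)\) gives maps into \(\mu_p^{\otimes j}\)-valued functions on \(\varinjlim\pi_0(\Spec A)\). Idempotents of \(\Cls{C}(X)\) are exactly clopen subsets of \(X\), and each idempotent lies in some finitely generated \(A\). Hence this colimit is the group of locally constant functions \(X\to\mu_p^{\otimes j}\).

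\emph{Target.} The group \(H^0_{\Bet}(X;\FF_p(j))\) is the group of locally constant functions \(X\to\ZZ(j)/p\). The generator \((2\pi i)^j\) matches \(\ZZ(j)/p\) with \(\mu_p^{\otimes j}\) via the exponential, and this is the identification used by classical Betti realization on étale \(\mu_p^{\otimes j}\) (Artin comparison).

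\emph{Compatibility.} We check that the composite in \cref{xou9xb} is this identification. For a finite-type \(A\), Betti realization on \(H^0\) factors as
\[
\mu_p^{\otimes j}(\CC)^{\pi_0(\Spec A)}\to\mu_p^{\otimes j}(\CC)^{\pi_0((\Spec A)(\CC))}.
\]
This map is bijective, since for finite-type \(\CC\)-schemes the complex points have the same connected components. Here \((\Spec A)(\CC)\) for a \(\QQ\)-algebra is a union over embeddings, which the map into \(X\) handles automatically. Pulling back along \(X\to(\Spec A)(\CC)\) is pullback of locally constant functions. Passing to the colimit then gives the claimed bijection.

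The main obstacle is the first step: making rigorous that \(H^0\) of the cdh-sheafified \(\tau_{\geq-j}\) étale complex is just locally constant \(\mu_p^{\otimes j}\)-valued functions. Concretely, this means checking that \(H^0_{\et}(\X;\mu_p^{\otimes j})\) is a cdh sheaf, via proper surjectivity and \(\pi_0\) descent. It also means pinning down that the cycle-class normalization in Betti realization matches the \((2\pi i)^j\) convention; this should follow from the weight-one case \(\mu_p\to\ZZ(1)/p\) via the Kummer sequence and multiplicativity.
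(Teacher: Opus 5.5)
Your argument is the paper's in substance. You use the degree-zero case of \cref{e:bl} (relying, as the paper does silently, on \(H^0_{\et}(\X;\mu_p^{\otimes j})\) already being a cdh sheaf), together with the correspondence between idempotents of \(\Cls{C}(X)\) and clopen subsets of \(X\). The paper applies \cref{e:bl} directly to \(\Cls{C}(X)\), where \(\mu_p\) is constant because \(\CC\subset\Cls{C}(X)\), and leaves compatibility with the comparison map implicit. Your passage through finite-type subalgebras instead matches how \(b_{j,X}\) is defined in \cref{xou9xb}. Your remark that Betti realization must induce an isomorphism \(\mu_p^{\otimes j}(\CC)\to\ZZ(j)/p\) is also a point the paper does not spell out. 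Any isomorphism suffices, so the exact normalization is irrelevant.

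Two intermediate claims are false as written, though neither is needed.

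First, for a general finite-type \(\QQ\)-algebra \(A\), \(H^0_{\et}(\Spec A;\mu_p^{\otimes j})\) is not \(\mu_p^{\otimes j}(\CC)^{\pi_0(\Spec A)}\). For \(A=\QQ\), \(j=1\) and \(p\) odd, it is \(\mu_p(\QQ)\), which is trivial. The fix is to restrict to the cofinal family of \(A\) containing the constant function \(\zeta=e^{2\pi i/p}\) (for \(X\neq\emptyset\)). There \(\mu_p\) is the constant sheaf \(\ZZ/p\), trivialized by \(\zeta\), and your formula holds. This also legitimizes your \(\pi_0\)-descent argument for the cdh-sheaf property, once you add that the generating covers are topological quotient maps, being open or closed surjections; surjectivity alone does not descend locally constant functions.

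Second, Betti realization \(H^0_{\et}(\Spec A;\mu_p^{\otimes j})\to H^0_{\Bet}((\Spec A)(\CC);\ZZ(j)/p)\) is not bijective in general. For \(A=\QQ(\zeta)\) with \(p\) odd, \(\Spec A\) is a point while \((\Spec A)(\CC)\) has \(p-1\) points. Moreover, at the point given by an embedding \(\sigma\), the section \(\zeta\) realizes to \(\sigma(\zeta)\), not to a fixed root of unity.

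What rescues the argument is that every point of \(X\) sends \(\zeta\) to \(e^{2\pi i/p}\). So after composing with \(X\to(\Spec A)(\CC)\), the map is pullback of locally constant functions followed by one fixed isomorphism \(\mu_p^{\otimes j}(\CC)\simeq\ZZ(j)/p\). Bijectivity then only has to be checked in the colimit, which your idempotent argument does. Note that the relevant object there is functions on \(\varprojlim\pi_0(\Spec A)\), not on a \(\varinjlim\).
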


\begin{proof}
  The degree-zero case of~\cref{e:bl} gives a natural isomorphism
  \begin{equation*}
    H^0_{\AA}(\Cls{C}(X);\FF_p(j))
    \simeq
    H^0_{\et}(\Spec\Cls{C}(X);\mu_p^{\otimes j}).
  \end{equation*}
  The right-hand side consists of the locally constant
  \(\mu_p^{\otimes j}\)-valued functions on~\(X\):
  Clopen subsets of~\(X\) correspond to idempotents
  in~\(\Cls{C}(X)\),
  which are \(\{0,1\}\)-valued continuous functions.
\end{proof}

\begin{lemma}\label{finite-triv}
  Let~\(X\) be a compactum of countable weight,
  \(j\geq0\) an integer, and \(p\) a prime.
  For \(i>0\),
  every class in either
  \(H^i_{\AA}(\Cls{C}(X);\FF_p(j))\)
  or \(H^i_{\Bet}(X;\FF_p(j))\)
  is killed by a cd~cover.
\end{lemma}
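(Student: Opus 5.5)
The plan follows the proof of \cref{rational-triv}. Betti classes are handled directly: for \(i>0\), every class in \(H^i_{\Bet}(X;\FF_p(j))\) is killed by a finite closed cover by \cref{x557y5}, and such a cover is a cd~cover.

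For a class \(\alpha\in H^i_{\AA}(\Cls{C}(X);\FF_p(j))\) with \(i>0\), the steps are as follows.

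First, by finitarity of \(\ZZ(j)^{\AA}\), I represent \(\alpha\) by a class \(\alpha_A\) on a finitely generated \(\QQ\)-subalgebra \(A\subset\Cls{C}(X)\). This gives a continuous map \(X\to(\Spec A)(\CC)\).

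Second, by \cref{e:bl}, \(\FF_p(j)^{\AA}\) on \(\QQ\)-algebras of finite type is the cdh~sheafification of \(\tau_{\geq-j}\Gamma_{\et}(\X;\mu_p^{\otimes j})\). So \(\alpha_A\) is a class in the cdh (hence éh) cohomology of a complex whose homotopy is concentrated in cohomological degrees \(0,\dots,j\). The key claim is that every positive-degree class of such a sheaf is locally trivial in the éh~topology. The cohomology sheaves of \(\tau_{\geq-j}\Gamma_{\et}(\X;\mu_p^{\otimes j})\) in positive degrees are the Zariski/cdh sheafifications of \(H^k_{\et}\), and these vanish étale-locally. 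Hence after éh~sheafification the complex is just \(\mu_p^{\otimes j}\) in degree \(0\). Its éh~cohomology agrees with étale cohomology of the same sheaf, and positive-degree étale classes are killed by étale covers.

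Concretely, I will work in the éh~topos, where \(\FF_p(j)^{\AA}\) maps to the éh~sheafification \(L_{\text{éh}}\), which is \(\mu_p^{\otimes j}\) concentrated in degree \(0\). The aim is to find an éh~covering family \(\{\Spec B_k\to\Spec A\}\) on which \(\alpha_A\) restricts to zero. The cleaner route is to choose a cdh cover by smooth \(\Spec B\) and then note that, on a smooth (even local henselian) scheme, the truncation only matters in degrees above \(j\). That step is the main obstacle: showing that a positive-degree class of the cdh~sheafified truncated étale complex dies after an étale refinement. I would argue stalkwise. Any class in degree \(i>0\) of a hypercomplete sheaf dies on some cover; here we need a cover in the éh~topology that kills it in the truncated sheaf itself, not only in its éh~sheafification. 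Passing to strictly henselian local rings of smooth schemes, the complex \(\tau_{\geq-j}\Gamma_{\et}\) has only \(H^0\), so its positive cohomology vanishes on éh~stalks. Since \(\alpha_A\) is a finite-type datum, it therefore vanishes on some finite éh~cover.

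Finally, I apply \cref{covers} to refine the pullback of this éh~cover along \(X\to(\Spec A)(\CC)\) to a cd~cover of \(X\). By naturality of the pullback \(\ZZ(j)^{\AA}(B_k)\to\ZZ(j)^{\AA}(\Cls{C}(X_k))\) through the factorization, \(\alpha\) restricts to zero on this cd~cover.
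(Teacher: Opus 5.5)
Your proposal is correct and has the same structure as the paper's proof: finitarity, then \cref{e:bl} to kill $\alpha_A$ on an éh-type cover of $\Spec A$, then \cref{covers}. The difference is that the paper splits into two cases, while you prove one uniform statement. For $0<i\leq j$, the paper uses \cref{e:bl} to identify $H^i_{\mathbb{A}}(A;\FF_p(j))$ with $H^i_{\et}(\Spec A;\mu_p^{\otimes j})$, so $\alpha_A$ dies on an étale cover. For $i>j$, the cdh homotopy sheaves of $L_{\cdh}\tau_{\geq-j}\Gamma_{\et}$ vanish, so $\alpha_A$ dies on a cdh cover.

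Your claim that positive-degree classes die éh-locally is exactly the union of these two cases. The step you call the main obstacle needs neither stalks nor a smooth cover. Write $F=L_{\cdh}G$ with $G=\tau_{\geq-j}\Gamma_{\et}(\X;\mu_p^{\otimes j})$, and $a_\tau$ for sheafification of presheaves of abelian groups. A class in $\pi_{-i}F(A)$ dies on an éh cover iff its image in $a_{\textnormal{éh}}\pi_{-i}F$ is zero. The map $G\to L_{\cdh}G$ induces isomorphisms on cdh-sheafified homotopy presheaves, and éh is finer than both cdh and étale. Hence, for $i>0$,
\[
  a_{\textnormal{éh}}\pi_{-i}F
  =a_{\textnormal{éh}}a_{\cdh}\pi_{-i}F
  =a_{\textnormal{éh}}a_{\cdh}\pi_{-i}G
  =a_{\textnormal{éh}}\pi_{-i}G
  =a_{\textnormal{éh}}a_{\et}\pi_{-i}G
  =0,
\]
because $\pi_{-i}G$ is either $H^i_{\et}(\X;\mu_p^{\otimes j})$ or $0$.

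Your stalkwise version is imprecise as written. Strict henselizations of local rings of smooth schemes are not éh points (blowing up the closed point of a regular local ring of dimension $\geq2$ gives a cdh cover that does not split). Moreover, the agreement of $F$ with $G$ at such rings is the classical Beilinson–Lichtenbaum theorem for smooth local rings; it is not a formal consequence of \cref{e:bl}. Relatedly, $F(A)$ is the cdh hypercohomology of $G$, not its éh hypercohomology, so ``cdh (hence éh) cohomology'' is wrong. The detour through a smooth cdh cover is unnecessary here, unlike in \cref{rational-triv}.

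The paper's case split is slightly more economical for $0<i\leq j$, since a single étale cover of $\Spec A$ suffices with no cdh step. Your formulation is uniform in~$i$ and shows why \cref{covers} is stated for éh covers.
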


\begin{proof}
  Any Betti class is killed by a cd~cover by~\cref{x557y5}.

  Let~\(\alpha\in H^i_{\AA}(\Cls{C}(X);\FF_p(j))\).
  By finitarity,
  it comes from a class
  in \(H^{i}_{\AA}(A;\FF_{p}(j))\)
  where \(A\) is a \(\QQ\)-algebra
  of finite type mapping to~\(\Cls{C}(X)\).

  When~\(i\leq j\),
  \cref{e:bl} identifies
  the class on~\(A\) with an étale cohomology class
  with coefficients in~\(\mu_p^{\otimes j}\).
  Since~\(i>0\), this class vanishes on an étale covering family.
  By~\cref{covers},
  the pullback of this covering family to~\(X\)
  admits a refinement by a cd~cover,
  which kills the original class.

  When~\(i>j\),
  \cref{e:bl} shows that~\(\alpha\)
  vanishes on a cdh~cover of~\(\Spec A\).
  By~\cref{covers},
  the pullback of this cover to~\(X\)
  admits a refinement by a cd~cover
  which kills the original class.
\end{proof}

\bibliographystyle{alpha}
\let\AA\oldAA \let\SS\oldSS  \newcommand{\yyyy}[1]{}

\end{document}